\documentclass[11pt,reqno]{amsart}

\usepackage[margin=0.9in]{geometry}
\usepackage{amsmath,amssymb,amsthm,mathtools}
\usepackage{enumitem}
\usepackage{needspace}
\usepackage[hidelinks]{hyperref}
\usepackage{microtype}

\newtheorem{theorem}{Theorem}[section]
\newtheorem{proposition}[theorem]{Proposition}
\newtheorem{lemma}[theorem]{Lemma}
\newtheorem{corollary}[theorem]{Corollary}
\theoremstyle{definition}

\theoremstyle{remark}
\newtheorem{remark}[theorem]{Remark}

\DeclareMathOperator{\rad}{rad}

\newcommand{\Pcal}{\mathcal P}

\title{Coprime matchings of divisor sets}
\author{Max A. Alekseyev}
\address{Department of Mathematics, The George Washington University,
Washington, DC 20052, USA}
\email{maxal@gwu.edu}
\author{Basile Beyer de Ryke}
\address{Brussels, Belgium}
\email{basile.beyer@gmail.com}
\date{}
\subjclass[2020]{Primary 05C70, 11N37; Secondary 11A25, 11B25}
\keywords{coprime matching, divisor set, arithmetic progression, Vandermonde determinant, asymptotic enumeration}

\hypersetup{
 pdftitle={Coprime matchings of divisor sets},
 pdfauthor={Max A. Alekseyev and Basile Beyer de Ryke},
 pdfsubject={Coprime matchings and asymptotic enumeration}
}

\begin{document}

\begin{abstract}
For a positive integer $n$, let $D(n)$ be its set of positive divisors
and put $N:=\tau(n)=|D(n)|$.  We characterize the arithmetic
progressions $\Pcal$ of length $N$ that admit a bijection with $D(n)$
pairing relatively prime integers.  Such a coprime matching exists if
and only if, for every prime $p\mid n$, at most $N/(\nu_p(n)+1)$ terms
of $\Pcal$ are divisible by $p$, where $\nu_p(n)$ is the exponent of
$p$ in $n$.  Call $n$ an $M$-number if $p^p\nmid n$ for every prime
$p$.  We prove that $D(n)$ can be matched with every primitive
arithmetic progression of length $N$, meaning one whose first term
and common difference are coprime, if and only if $n$ is an
$M$-number.  This proves a conjecture of McNew and Pomerance.
In particular, $n$ is matchable, meaning that $D(n)$ can be matched
with the primitive progression $\{1,2,\ldots,N\}$, if and only if
$\lfloor N/p\rfloor\le N/(\nu_p(n)+1)$ for all $p\mid n$.
We classify the matchable non-$M$-numbers, settle a conjectured
divisor criterion, and prove that every divisor of a matchable
integer is matchable.  The proofs combine a Vandermonde determinant
with a partition into shorter progressions.  If $E(x)$ counts the
matchable non-$M$-numbers up to $x$, then, as $x\to\infty$,
\[
 E(x)\sim\frac{x}{\log x}
       \sum_{r\ge1}\frac{(\log\log x)^{r-1}}{(r-1)!\,q_r^{q_r}},
\]
where $q_r$ is the least prime greater than $2^r$.  This also gives
a second term in the asymptotic formula for the count of all
matchable integers up to $x$.
\end{abstract}

\maketitle

\section{Introduction}

Given two finite sets of integers with the same number of elements, a
\emph{coprime matching} between them is a bijection under which
corresponding integers are relatively prime.  We write
$[1,N]:=\{1,2,\ldots,N\}$.

We study coprime matchings from divisor sets to arithmetic progressions.
For a positive integer $n$, let
\[
 D(n):=\{d\in\mathbb Z_{>0}:d\mid n\},\qquad \tau(n):=|D(n)|.
\]
We write $\rad(n):=\prod_{p\mid n}p$, where the product ranges over primes,
and let $\nu_p(n)$ denote the exponent of $p$ in the prime factorization
of $n$.  We ask when $D(n)$ admits a coprime matching with a prescribed
arithmetic progression of length $\tau(n)$.  We call an arithmetic
progression \emph{primitive} if its first term and common difference
are coprime.  We take common differences to be positive; reversing
a progression handles a negative common difference.

For a prime $p\mid n$, exactly $\tau(n)/(\nu_p(n)+1)$ divisors of $n$
are not divisible by $p$.  Every term of the progression divisible by $p$ must be
matched to one of these divisors.  Our main result shows that the
resulting counting bounds, considered separately for each prime
dividing $n$, give a complete criterion.

\Needspace{9\baselineskip}
\begin{theorem}\label{thm:progression}
Let $n$ be a positive integer, put $N:=\tau(n)$, and let
\[
 \Pcal:=\{a+bt:t=0,1,\ldots,N-1\},\qquad
 a\in\mathbb Z,\quad b\in\mathbb Z_{>0}.
\]
For each prime $p\mid n$, put
\[
 c_p(\Pcal):=\#\{x\in\Pcal:p\mid x\}.
\]
Then $D(n)$ admits a coprime matching with $\Pcal$ if and only if
\begin{equation}\label{eq:progression-criterion}
 c_p(\Pcal)\le\frac{N}{\nu_p(n)+1}
 \qquad\text{for every prime }p\mid n.
\end{equation}
\end{theorem}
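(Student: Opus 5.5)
The necessity of \eqref{eq:progression-criterion} is the counting argument already given before the statement. A term $x\in\Pcal$ with $p\mid x$ must be matched to a divisor not divisible by $p$, and exactly $N/(\nu_p(n)+1)$ divisors of $n$ have this property. So the substance of the proof is sufficiency, and I would prove it with Hall's marriage theorem on the bipartite graph between $D(n)$ and $\Pcal$ whose edges join coprime pairs.

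\emph{First step: reduce to supports.} Only primes $p\mid n$ matter. For a term $x$ put $T(x):=\{p\mid n: p\mid x\}$, and for a divisor $d$ put $S(d):=\{p: p\mid d\}$; then $x$ and $d$ are adjacent exactly when $S(d)\cap T(x)=\emptyset$.
\begin{itemize}
\item The number of divisors with support exactly $S$ is $w(S):=\prod_{p\in S}\nu_p(n)$.
\item If $p\mid b$, then $c_p(\Pcal)\in\{0,N\}$, and the criterion forces $c_p=0$. Such primes can therefore be discarded.
\item For every other prime $p$, the condition $p\mid a+bt$ is a single residue class $t\equiv t_p\pmod p$.
\end{itemize}
The problem thus becomes a transportation problem between types. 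Hall's condition reads as follows: for every family $\mathcal F$ of sets of primes that is closed upward (terms with a larger $T$ have fewer neighbours), the number of terms with $T(x)\in\mathcal F$ is at most the number of divisors $d$ with $S(d)$ disjoint from some member of $\mathcal F$.

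\emph{Second step: the heuristic.} In the product model the statement is just stochastic domination. Choose a uniform divisor; the events ``$p\nmid d$'' are independent, with probabilities $r_p=1/(\nu_p(n)+1)$. Choose a uniform term; if the events ``$p\mid x$'' were independent with probabilities $q_p=c_p/N\le r_p$, we could couple them coordinatewise so that $p\mid x$ implies $p\nmid d$. That coupling is a fractional perfect matching, and integrality of the bipartite matching polytope would then finish the proof.

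\emph{Third step: the main obstacle.} For a progression of length $N$, the divisibility events are not exactly independent. By the Chinese remainder theorem they are independent only over blocks whose length is a multiple of $\prod p$. To handle this, I would partition $\Pcal$ into shorter subprogressions $\{a+b(t_0+kM)\}$ adapted to the primes whose divisibility pattern is not equidistributed. On each piece the type counts are exact products or differ from them in a controlled way. I would then match the pieces greedily against a corresponding decomposition of $D(n)=\prod_p\{p^0,\dots,p^{\nu_p(n)}\}$ into sub-boxes.

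The hardest point is proving the Hall inequality for an arbitrary up-closed $\mathcal F$ with exact integer counts. There I would express the relevant counts as polynomial identities in the residues $t_p$. I would use a Vandermonde determinant to show that the extremal configurations, where every $c_p$ attains $\lfloor\cdot\rfloor$ or $\lceil\cdot\rceil$ of its expected value, are the binding ones. The remaining inequality then reduces to the single-prime bounds \eqref{eq:progression-criterion}.
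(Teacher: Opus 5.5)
Your reduction to types is correct, as is the observation that primes $p\mid\gcd(n,b)$ are forced to have $c_p(\Pcal)=0$, and the reformulation of Hall's condition over up-closed families. The coupling argument is also valid in an exact product model.

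The gap is that your third step carries all of the content but contains no actual mechanism. To see that nothing else does the work, take $n$ an $M$-number and $\Pcal$ primitive. Then $c_p(\Pcal)\le\lceil N/p\rceil\le N/(\nu_p(n)+1)$ for $p\nmid b$, and $c_p(\Pcal)=0$ for $p\mid b$. So the single-prime bounds hold automatically, and the assertion is exactly the McNew--Pomerance conjecture. Everything rests on the non-independence of the divisibility events, which you name but do not resolve:
\begin{itemize}
\item Pieces of a length-$N$ progression generally cannot have length divisible by the relevant $\prod p$ (already $n=30$, $N=8$). So ``differ in a controlled way'' is the whole problem.
\item The Vandermonde step is not meaningful as stated. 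For $p\nmid b$, every progression has $c_p\in\{\lfloor N/p\rfloor,\lceil N/p\rceil\}$, so there are no distinguished ``extremal configurations''.
\item No reason is given why multi-prime Hall inequalities should reduce to single-prime ones.
\end{itemize}

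The missing idea is to replace Hall's inequalities by an algebraic certificate. After dividing by $\gcd(a,b)$, which is coprime to $n$, assume $\Pcal$ is primitive.
\begin{itemize}
\item For $p_i\nmid b$, take $\zeta_i\ne1$ with $\zeta_i^{p_i}=1$ and put $y_i(t)=\zeta_i^{a+bt}-1$. For $p_i\mid b$, put $y_i(t)=p_i^t$.
\item The matrix $M_{t,d}=\prod_i y_i(t)^{\nu_{p_i}(d)}$ is nonzero exactly on coprime pairs.
\item Expanding the binomials gives $M=VA$. Here $A$ is triangular with respect to divisibility, with nonzero diagonal. The factor $V$ is a Vandermonde matrix with nodes $\lambda_f=f_1\prod_{p_i\nmid b}\zeta_i^{b\nu_{p_i}(f)}$, where $f_1$ is the part of $f$ supported on primes dividing $b$.
\item These nodes are distinct when $\nu_{p_i}+1\le p_i$ for every $p_i\nmid b$. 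Hence $\det M\ne0$, and a perfect matching exists.
\end{itemize}

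The general case then needs a structural fact you did not use. Because $c_p(\Pcal)\ge\lfloor N/p\rfloor$ for $p\nmid b$, the criterion and Lemma~\ref{lem:one-supercritical} leave at most one prime $p$ with $p^p\mid n$ and $p\nmid b$. Write $n=p^{p+c-1}m$ with $c\,\tau(m)<p$, and put $T=\tau(m)$.
\begin{itemize}
\item Split $\Pcal$ into $p$ pieces of difference $pb$ followed by $c$ pieces of difference $b$, each of length $T$.
\item Exactly one piece consists of multiples of $p$. The bound $c_p(\Pcal)\le T$ forces every other piece to avoid $p$.
\item Match the pieces to $D(m),pD(m),\dots,p^{\nu_p(n)}D(m)$ using the determinant lemma for $m$, after dividing the multiples of $p$ by $p$.
\end{itemize}
Without that lemma, or a genuine substitute proof of Hall's condition outside the product model, your plan does not establish sufficiency.
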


Necessity follows from the count above.  Sufficiency requires more:
terms divisible by different primes may need the same divisors as
coprime partners.  By Hall's theorem~\cite{Hall}, a coprime matching
exists if and only if, for every subset $X\subseteq\Pcal$, at least
$|X|$ divisors of $n$ are coprime to at least one element of $X$.
Theorem~\ref{thm:progression} shows that the counting bounds for
individual primes already guarantee all these inequalities, even
when the progression is not primitive.

The proof has two parts.  We first construct a matrix whose entries
are nonzero exactly when the corresponding integers are coprime.
An invertible change of columns gives a Vandermonde matrix formed
from distinct numbers.  The original matrix therefore has a nonzero
determinant, which proves that a coprime matching exists.  This construction applies
under explicit bounds on the exponents in the prime factorization
of $n$.  For the remaining cases allowed by
\eqref{eq:progression-criterion}, we partition the progression into
shorter progressions and combine matchings obtained from the same
construction.  The two special cases considered below use
$[1,\tau(n)]$ or require a matching with every primitive arithmetic
progression.

A classical related problem is a conjecture of D.~J. Newman: if $I$ is
any interval of $N$ consecutive integers, then $[1,N]$ admits a coprime
matching with $I$.  Pomerance and Selfridge proved this conjecture in
1980 \cite{PomeranceSelfridge}.  Robertson and Small later replaced
the interval by an arithmetic progression
\[
 S:=\{a+bt:t=0,1,\dots,N-1\},\qquad
 a\in\mathbb Z,\quad b\in\mathbb Z_{>0},
\]
and determined exactly when a coprime matching $[1,N]\to S$ exists
\cite{RobertsonSmall}.  In particular, such a matching exists whenever
$\gcd(a,b)=1$.  Thus $[1,N]$ admits a coprime matching with every primitive
arithmetic progression of length $N$.  More recent work has studied
the existence and number of coprime mappings and coprime permutations
\cite{PomeranceMatchings,PomerancePermutations,SahSawhney}; Bohman and
Peng also connected coprime mappings between intervals with the lonely
runner problem \cite{BohmanPeng}.

In 2022, Recam\'an called an integer $n$ \emph{matchable} if $D(n)$
admits a coprime matching with $[1,\tau(n)]$ and asked whether matchable
integers are more numerous than nonmatchable integers \cite{RecamanMO}.
For example, $54$ is matchable, whereas $36$ is not: its six even
divisors cannot all be paired with the five odd integers in
$[1,\tau(36)]=[1,9]$.
The ensuing discussion raised the problem of characterizing them.
Among the comments on that question,
K\"onig and MAA conjectured that every integer not divisible by $p^p$
for any prime $p$ should be matchable and that few integers outside
this class should be matchable; McNew and Pomerance record these
conjectures in \cite{McNewPomerance}.  K\"onig also asked
whether every divisor of a matchable integer must itself be matchable.
In the MathOverflow discussion \cite{RecamanMO}, MAA proposed the more precise criterion
\begin{equation}\label{eq:original-conjecture}
 \tau(n/d)\le
 \left\lceil\frac{(d-1)\tau(n)}{d}\right\rceil
 \qquad\text{for every }d\mid\rad(n),\ d>1.
\end{equation}
The condition is necessary, and the conjecture was verified
computationally in the MathOverflow discussion for $n\le 10^5$.

McNew and Pomerance subsequently initiated a systematic study of
matchable numbers \cite{McNewPomerance}.  They call $n$ an
\emph{$M$-number} if
\[
 p^p\nmid n\qquad\text{for every prime }p.
\]
They proved that almost every $M$-number is matchable, that every
squarefree integer (one with no squared prime factor) is matchable,
and that the integers belonging to exactly one of the classes of
matchable integers and $M$-numbers form a set of density zero.
Here the asymptotic density of a set is
the limiting proportion of integers in $[1,X]$ that belong to it as
$X\to\infty$; ``almost every'' allows a set of exceptions of density
zero.  Hence the matchable integers have asymptotic density
\[
 \alpha:=\prod_{p\ \mathrm{prime}}\left(1-\frac1{p^p}\right)
 =0.72199023441955\ldots.
\]
They explicitly endorsed the K\"onig--Alekseyev conjecture that
every $M$-number is matchable.  They also observed that this would
follow from K\"onig's question about divisors, because every
$M$-number divides one of the explicitly matchable numbers
$M_P:=\prod_{q\le P,\ q\ \mathrm{prime}}q^{q-1}$; they did not find a
proof that matchability passes to divisors.

McNew and Pomerance also considered a stronger requirement.  They use
``coprime arithmetic progression'' for what we call a primitive
progression \cite{McNewPomerance}.  A positive integer
$n$ is \emph{strongly matchable} if $D(n)$ admits a coprime matching
with every primitive arithmetic progression of length $\tau(n)$.
The Robertson--Small theorem gives such a matching from
$[1,\tau(n)]$; the question is whether $D(n)$ can replace this interval.  McNew and
Pomerance conjectured that $n$ is strongly matchable if and only if it
is an $M$-number \cite[Conjecture~1]{McNewPomerance}.

We obtain the following characterization, proving this conjecture.

\begin{theorem}\label{thm:strong}
A positive integer is strongly matchable if and only if it is an
$M$-number.
\end{theorem}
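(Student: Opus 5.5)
We derive Theorem~\ref{thm:strong} from Theorem~\ref{thm:progression}. Strong matchability then becomes a purely arithmetic question: for every primitive progression $\Pcal$ of length $N=\tau(n)$ and every prime $p\mid n$, is $c_p(\Pcal)\le N/(\nu_p(n)+1)$? We first record the possible values of $c_p(\Pcal)$ when $\Pcal=\{a+bt\}$ is primitive. If $p\mid b$, then $p\nmid a$, so no term is divisible by $p$ and $c_p(\Pcal)=0$. If $p\nmid b$, the terms divisible by $p$ are those with $t$ in a single residue class modulo $p$. Hence $c_p(\Pcal)\in\{\lfloor N/p\rfloor,\lceil N/p\rceil\}$, and we can choose $a$ (say $a=0$ is not allowed with $b>1$ coprime, but $a=p$, $b=1$ works) so that the class is $t\equiv 0$, giving $c_p=\lceil N/p\rceil$. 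So $n$ is strongly matchable if and only if $\lceil N/p\rceil\le N/(e+1)$ for every $p\mid n$, where $e=\nu_p(n)$.

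Sufficiency: suppose $n$ is an $M$-number, so $e\le p-1$ and $e+1\le p$. Since $(e+1)\mid N$, the quantity $N/(e+1)$ is an integer. Moreover $N/(e+1)\ge N/p$, and an integer that is at least $N/p$ is at least $\lceil N/p\rceil$. So the criterion~\eqref{eq:progression-criterion} holds for every primitive $\Pcal$, and Theorem~\ref{thm:progression} supplies the matching.

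Necessity: suppose $p^p\mid n$, so $e\ge p$ and $N/(e+1)<N/p\le\lceil N/p\rceil$. We must exhibit a primitive progression with $c_p=\lceil N/p\rceil$. Take $\Pcal=\{p,p+1,\ldots,p+N-1\}$, which has $a=p$, $b=1$ and is primitive. Its multiples of $p$ occur at $t\equiv0\pmod p$, so $c_p=\lceil N/p\rceil$, which exceeds $N/(e+1)$. The criterion fails, so no coprime matching exists and $n$ is not strongly matchable. The simpler interval $[1,N]$ would only give $\lfloor N/p\rfloor$ multiples, which can fail to violate the bound; this is why we shift the start.

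Once Theorem~\ref{thm:progression} is available, the argument has no real obstacle. The only points needing care are two facts used above. First, $\tau(n)/(\nu_p(n)+1)$ is an integer, which is what turns the inequality $N/p\le N/(e+1)$ into the ceiling bound. Second, the shift to $a=p$ is needed to realize the ceiling rather than the floor in the necessity direction.
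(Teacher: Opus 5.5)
Your proof is correct. The necessity half is the paper's own argument: an interval of $N$ consecutive integers starting at a multiple of $p$ contains $\lceil N/p\rceil>N/(\nu_p(n)+1)$ multiples of $p$.

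\textbf{Sufficiency.} Here you route through Theorem~\ref{thm:progression}. You check its counting criterion using $c_p(\Pcal)\le\lceil N/p\rceil$ for primitive $\Pcal$, the integrality of $N/(\nu_p(n)+1)$, and $\nu_p(n)+1\le p$. The paper instead applies Lemma~\ref{lem:vandermonde} directly with $m=n$. The $M$-number condition says precisely that $h_i=\nu_{p_i}(n)+1\le p_i$ for every $i$. So the lemma's hypothesis holds for every common difference $b$, and no counting is needed at all.

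\textbf{Circularity.} Your route is not circular, because the proof of Theorem~\ref{thm:progression} never uses Theorem~\ref{thm:strong}. In fact, for an $M$-number that proof falls into the branch where no prime satisfies $p^p\mid n$, and it simply invokes Lemma~\ref{lem:vandermonde} with $m=n$. Once unpacked, the two sufficiency arguments coincide.

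\textbf{Trade-off.} The paper's ordering is more economical: Theorem~\ref{thm:strong} is available before the partition into shorter progressions is developed. Your version gives a cleaner reformulation. The maximum of $c_p(\Pcal)$ over primitive $\Pcal$ is $\lceil N/p\rceil$, so strong matchability is equivalent to $\lceil N/p\rceil\le N/(\nu_p(n)+1)$ for all $p\mid n$. The divisibility $(\nu_p(n)+1)\mid N$ then makes it transparent that this inequality is equivalent to $\nu_p(n)\le p-1$.
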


The forward implication is \cite[Proposition~4]{McNewPomerance}.
The Vandermonde construction gives the converse directly, before we
prove the full progression criterion.

For matchings with $[1,\tau(n)]$, we obtain the following equivalent
arithmetic criteria, including \eqref{eq:original-conjecture}.

\begin{theorem}\label{thm:characterization}
Let $n$ be a positive integer, put $N:=\tau(n)$, and let $\nu_p(n)$
denote the exponent of $p$ in the prime factorization of $n$.
The following are equivalent.
\begin{enumerate}[label=\textup{(\roman*)}]
 \item $n$ is matchable.
 \item For every prime $p\mid n$,
 \begin{equation}\label{eq:prime-criterion-intro}
   \left\lfloor\frac{N}{p}\right\rfloor
   \le \frac{N}{\nu_p(n)+1}.
 \end{equation}
 \item For every $d\mid\rad(n)$, $d>1$,
 \begin{equation}\label{eq:d-criterion-intro}
   \tau(n/d)\le
   \left\lceil\frac{(d-1)N}{d}\right\rceil.
 \end{equation}
 \item Either $n$ is an $M$-number, or exactly one prime $p$
 satisfies $p^p\mid n$, and for this prime
 \begin{equation}\label{eq:structural-intro}
   \bigl(\nu_p(n)+1-p\bigr)
   \tau\!\left(\frac{n}{p^{\nu_p(n)}}\right)<p.
 \end{equation}
\end{enumerate}
\end{theorem}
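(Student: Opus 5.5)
We take Theorem~\ref{thm:progression} as given and apply it to $\Pcal=[1,N]$, i.e.\ $a=1$, $b=1$.  For a prime $p$ we have $c_p([1,N])=\lfloor N/p\rfloor$, so \eqref{eq:progression-criterion} reads exactly as \eqref{eq:prime-criterion-intro}.  This gives (i)$\Leftrightarrow$(ii) at once.  The rest of the proof shows that (ii) is equivalent to (iii) and to (iv); both are elementary manipulations of the divisor function.

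\emph{(ii)$\Leftrightarrow$(iii).}  Write $e_p=\nu_p(n)$.  Taking $d=p$ prime in \eqref{eq:d-criterion-intro} gives $\tau(n/p)=Ne_p/(e_p+1)\le\lceil (p-1)N/p\rceil=N-\lfloor N/p\rfloor$, which rearranges to \eqref{eq:prime-criterion-intro}.  Hence (iii)$\Rightarrow$(ii).

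For the converse, let $d=p_1\cdots p_k$ with $k\ge2$.  Put $x_i=e_i/(e_i+1)<1$.  Then $\tau(n/d)=N\prod x_i$, and it suffices to show $\tau(n/d)\le(d-1)N/d$.  I would split into two cases.
\begin{itemize}
 \item If some $e_i<p_i$, then $x_i\le (p_i-1)/p_i\le (d-1)/d$, and the bound follows since the other factors are at most $1$.
 \item Otherwise every $x_i\ge(p_i-1)/p_i$.  Then one uses only $\prod x_i\le \min_i x_i$.
\end{itemize}
The second case needs more care.  One route is to observe that (ii) forces at most one prime with $e_p\ge p$; this is the content of (iv), so I would prove (ii)$\Rightarrow$(iv) first.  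With at most one such prime $p_1$, the other factors satisfy $x_i\le (p_i-1)/p_i$.  Then $\prod x_i\le\prod_{i\ge2}(1-1/p_i)\le 1-1/(p_2\cdots p_k)$, and this is at most $(d-1)/d$ once multiplied by $x_1<1$.  For the last step use $(1-u)(1-v)\le 1-uv$ in the form $1-1/(p_2\cdots p_k)\cdot$(something $<1$).  Alternatively, since (ii)$\Leftrightarrow$(i), one could note that (i)$\Rightarrow$(iii) holds by necessity: terms of $[1,N]$ sharing a factor with $d$ must go to divisors $m$ with $\gcd(m,d)=1$.  That Hall-type count (the paper states the condition is necessary) is the cleanest route, and I would use it.  So (iii)$\Rightarrow$(ii)$\Rightarrow$(i)$\Rightarrow$(iii).

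\emph{(ii)$\Leftrightarrow$(iv).}  Write $N=(e_p+1)T$ with $T=\tau(n/p^{e_p})$.
\begin{itemize}
 \item If $e_p\le p-1$, then $\lfloor N/p\rfloor\le N/p\le N/(e_p+1)=T$, so \eqref{eq:prime-criterion-intro} is automatic.
 \item If $e_p\ge p$, the condition $\lfloor (e_p+1)T/p\rfloor\le T$ is equivalent to $(e_p+1)T<(T+1)p$, i.e.\ $(e_p+1-p)T<p$.  This is \eqref{eq:structural-intro}.
\end{itemize}
It remains to show that two primes $p<q$ with $p^p\mid n$ and $q^q\mid n$ violate (ii).  Suppose both exist.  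Then $T_p=\tau(n/p^{e_p})$ is divisible by $e_q+1\ge q+1>p$, so $T_p\ge p+1$.  Since $e_p+1-p\ge1$, this gives $(e_p+1-p)T_p\ge p+1>p$, a contradiction.  Thus (ii) holds iff either no prime has $e_p\ge p$, or exactly one does and satisfies \eqref{eq:structural-intro}.

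The main obstacle is only bookkeeping.  The floor identity $\lfloor m/p\rfloor\le T\iff m<(T+1)p$ must be applied correctly.  We must also justify necessity of (iii) rigorously: the multiples in $[1,N]$ of primes dividing $d$ number $N-\#\{t\le N:\gcd(t,d)=1\}$, and this has to be compared with $\lceil (d-1)N/d\rceil$.  If that comparison is not clean (it fails in general for composite $d$), I fall back on the direct computation above using the structure from (iv).
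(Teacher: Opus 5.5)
Your proofs of (i)$\Leftrightarrow$(ii), (iii)$\Rightarrow$(ii) and (ii)$\Leftrightarrow$(iv) are the paper's argument. They use Theorem~\ref{thm:progression} on $[1,N]$, the floor identity $\lfloor (e_p+1)T/p\rfloor\le T\iff(e_p+1-p)T<p$, and the fact that $e_q+1$ divides $T_p$ when two primes $p<q$ with $e_p\ge p$, $e_q\ge q$ exist.

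The only divergence is (ii)$\Rightarrow$(iii), and the route you say you prefer is misstated. It is false that a term of $[1,N]$ sharing a prime with $d$ must go to a divisor coprime to $d$: for $d=6$, the term $2$ may be matched to the divisor $3$. The correct necessity count runs the other way. The $\tau(n/d)$ divisors of $n$ divisible by $d$ must each go to a term coprime to $d$. No multiple of $d>1$ is coprime to $d$, so there are at most $N-\lfloor N/d\rfloor=\lceil (d-1)N/d\rceil$ such terms. This comparison is clean for every $d>1$, composite or not. With it, your cycle (iii)$\Rightarrow$(ii)$\Rightarrow$(i)$\Rightarrow$(iii) is valid and very short, though (ii)$\Rightarrow$(iii) then rests on the full matching theorem.

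Your fallback computation is also correct, but roundabout. For $k\ge2$, the uniqueness of a prime with $e_p\ge p$ already forces some $e_i<p_i$. So your first case always applies, the second is vacuous, and the step involving $x_1$ and $(1-u)(1-v)\le1-uv$ is unnecessary. The paper needs neither the matching theorem nor uniqueness. It picks any prime $p\mid d$ and uses $n/d\mid n/p$:
\[
 \tau(n/d)\le\tau(n/p)\le\left\lceil\frac{(p-1)N}{p}\right\rceil\le\left\lceil\frac{(d-1)N}{d}\right\rceil .
\]
This makes (ii)$\Leftrightarrow$(iii) a self-contained arithmetic fact (Lemma~\ref{lem:equivalent-d}), valid even for non-squarefree $d\mid n$.
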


For $\Pcal=[1,N]$, we have $c_p(\Pcal)=\lfloor N/p\rfloor$, so
\eqref{eq:prime-criterion-intro} is precisely the counting condition
in Theorem~\ref{thm:progression}.  When $p^p\mid n$, McNew and Pomerance
used the bound $N/(\nu_p(n)+1)\le N/(p+1)$ to obtain a weaker necessary
condition \cite[Proposition~1]{McNewPomerance}.
The equivalence with part~(iv) explicitly describes the matchable
non-$M$-numbers.  It also answers K\"onig's question about divisors of
matchable integers.

\begin{corollary}\label{cor:divisor-closed-intro}
Every divisor of a matchable integer is matchable.
\end{corollary}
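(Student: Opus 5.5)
We use the structural criterion, part~(iv) of Theorem~\ref{thm:characterization}, which is equivalent to matchability. Let $n$ be matchable and $m\mid n$. The plan is to check that $m$ satisfies~(iv).

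\textbf{Case 1: $m$ is an $M$-number.} If $p^p\nmid m$ for every prime $p$, then $m$ is an $M$-number. By Theorem~\ref{thm:strong}, $m$ is strongly matchable, and in particular matchable, because $[1,\tau(m)]$ is a primitive progression.

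\textbf{Case 2: $m$ is not an $M$-number.} Then $p^p\mid m$ for some prime $p$. Since $p^p\mid m\mid n$, $n$ is not an $M$-number either. By~(iv) for $n$, $p$ is the unique prime with $p^p\mid n$. Hence $p$ is also the unique such prime for $m$. Write $e:=\nu_p(m)\le f:=\nu_p(n)$, and put $m':=m/p^{e}$ and $n':=n/p^{f}$. Then $m'\mid n'$, so $\tau(m')\le\tau(n')$. Moreover $0<e+1-p\le f+1-p$, because $e\ge p$. Multiplying these two inequalities between nonnegative quantities gives
\[
 (e+1-p)\,\tau(m')\le(f+1-p)\,\tau(n')<p,
\]
where the last inequality is~\eqref{eq:structural-intro} for $n$. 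This is exactly~\eqref{eq:structural-intro} for $m$, so $m$ satisfies~(iv) and is matchable.

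The real content lies in the equivalence (i)$\Leftrightarrow$(iv), which is assumed here as part of Theorem~\ref{thm:characterization}. Granting that equivalence, the corollary is immediate. The only point needing care is the positivity of $e+1-p$, which is what lets the two inequalities be multiplied. Condition~(iv) is visibly monotone under passing to divisors, unlike the floor condition~(ii), and that is why we argue through~(iv) rather than~(ii).
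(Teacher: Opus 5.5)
Your proof is correct and follows essentially the same route as the paper. Both treat $M$-number divisors via Theorem~\ref{thm:strong} and otherwise verify criterion~(iv) of Theorem~\ref{thm:characterization} by combining $1\le e+1-p\le \nu_p(n)+1-p$ with the fact that the $p$-free part of the divisor has at most as many divisors as the $p$-free part of $n$. The only differences are organizational: you split on whether the divisor, rather than $n$, is an $M$-number, and you explicitly check that the supercritical prime is unique for the divisor, which the paper leaves implicit.
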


The structural classification also permits a more precise count.
For real $x\ge1$, put
\[
 E(x):=\#\{n\le x:n\text{ is matchable but is not an }M\text{-number}\}.
\]
McNew and Pomerance~\cite{McNewPomerance} proved
$E(x)\le x/(\log x)^{1+o(1)}$ and noted the lower bound
$E(x)\gg x/\log x$ supplied by integers of the form $27q$, with $q$
prime.  We determine an asymptotic formula for $E(x)$.
All logarithms are natural; write $\log_1x:=\log x$ and
$\log_{j+1}x:=\log(\log_jx)$ whenever these expressions are defined.
For each integer $r\ge1$, let $q_r$ be the least prime greater than
$2^r$, and define
\begin{equation}\label{eq:Psi-definition}
 \Psi(t):=\sum_{r\ge1}\frac{t^{r-1}}{(r-1)!\,q_r^{q_r}}.
\end{equation}
This series converges for every complex $t$.

\begin{theorem}\label{thm:exceptional-count}
As $x\to\infty$,
\[
 E(x)\sim\frac{x}{\log x}\Psi(\log_2x).
\]
\end{theorem}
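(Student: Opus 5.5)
We use the structural description in Theorem~\ref{thm:characterization}(iv). A matchable non-$M$-number has the form $n=p^{k}m$ with $k=\nu_p(n)\ge p$ and $p\nmid m$, where $m$ is an $M$-number and $(k+1-p)\tau(m)<p$. (Here $m$ is automatically an $M$-number, since exactly one prime $p$ has $p^p\mid n$.) Since $k+1-p\ge1$, we need $\tau(m)<p$. We expect the main term to come from $k=p$, since larger $k$ costs an extra factor $1/p$. In that case the condition is $\tau(m)<p$. For squarefree $m$ with $\omega(m)=r$ this reads $2^r<p$. So the cheapest admissible prime is $q_r$, and the contribution of $p=q_r$, $k=p$ is
\[
\#\{m\le x/q_r^{q_r}:\ m\text{ squarefree},\ \omega(m)=r,\ q_r\nmid m\}.
\]
By the Landau--Sathe--Selberg asymptotic, this is $\sim \frac{x}{q_r^{q_r}\log x}\frac{(\log_2x)^{r-1}}{(r-1)!}$ for fixed $r$. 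The coprimality condition with $q_r$ only changes lower-order terms, and replacing $\log(x/q_r^{q_r})$ by $\log x$ is harmless for fixed $r$. Summing over $r$ gives the main term $\frac{x}{\log x}\Psi(\log_2x)$.

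It remains to show everything else is $o\bigl(\frac{x}{\log x}\Psi(\log_2 x)\bigr)$. We first record that $\Psi(t)$ grows roughly like $e^{ct}$ with only a slowly varying loss, since $q_r^{q_r}\approx 2^{r2^r}$ decays superexponentially. Hence $\Psi(\log_2 x)\ge (\log x)^{-\epsilon}$ for every $\epsilon>0$ eventually. Indeed $\Psi(\log_2x)$ is dominated by terms with $r$ around a slowly growing optimum, and the main term has size $x/(\log x)^{1+o(1)}$, consistent with McNew--Pomerance. The remainder splits into several classes.

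\begin{enumerate}[label=\textup{(\alph*)}]
\item \emph{Non-squarefree $m$ with $\tau(m)<p$, $k=p$.} Write $m=m_1m_2$ with $m_1$ squarefree and $m_2$ squarefull. Then $\tau(m_1)\le\tau(m)/\tau(m_2)$ forces $\omega(m_1)$ strictly smaller than the maximum allowed by $p$. So these $m$ are counted with $(\log_2x)^{s-1}/(s-1)!$ for some $s<r(p)$, where $r(p)$ is the largest $r$ with $2^r<p$, and the convergent sum over $m_2$ costs only a constant. Compared with the term for $(r(p),p)$ this saves a factor $\log_2 x$, or else involves a larger prime.
\item \emph{Squarefree $m$ with $\omega(m)=r$ but $p>q_r$.} Here the weight is $p^{-p}\le q_r^{-q_r}/q_r$ and the sum over such $p$ converges rapidly. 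After division by the main term this gives $o(1)$, because the tail sum over $p$ is much smaller than $q_r^{-q_r}$ uniformly in $r$.
\item \emph{Exponents $k>p$.} These carry an extra factor $p^{-1}$ and the stronger constraint $(k+1-p)\tau(m)<p$, so they are bounded exactly like (b).
\item \emph{Large $r$.} The main challenge is uniformity in $r$, with $r$ ranging up to about $\log_2 x$. For this we use the Hardy--Ramanujan inequality $\pi_r(y)\ll \frac{y}{\log y}\frac{(\log_2y+C)^{r-1}}{(r-1)!}$, uniform in $r$. Together with the superexponential decay of $q_r^{-q_r}$, this shows that terms with $r>R$ contribute at most $\eta(R)\frac{x}{\log x}\Psi(\log_2x)$ with $\eta(R)\to0$. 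A compactness-style truncation then finishes the proof: fix $R$, use exact asymptotics for $r\le R$, bound the tail by the uniform estimate, and let $R\to\infty$.
\end{enumerate}

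The main obstacle is step~(d). We must show that $\Psi(\log_2x)$ is itself dominated by bounded $r$ relative to the errors, i.e. that the ratio $\Psi(t+C)/\Psi(t)$ stays bounded. Otherwise the constant shift $C$ in the Hardy--Ramanujan bound would not be absorbable. We would prove this from the rapid decay of the coefficients: the terms peak at $r\approx r^*(t)$ with $r^*$ growing like $\log t$. Hence $\Psi(t+C)/\Psi(t)\le \sup_r\bigl((t+C)/t\bigr)^{r-1}\to1$ over the relevant range, and the range $r\gg \log t$ is negligible in both series. A similar argument handles the condition $x/p^p\ge 1$, which cuts off $p$ at about $\log x/\log_2 x$.
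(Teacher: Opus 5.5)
Your decomposition picks out the right dominant families ($q_r^{q_r}m$ with $m$ squarefree, $\omega(m)=r$, $q_r\nmid m$). However, step (d) rests on a false premise, and the proof does not go through. The series $\Psi(t)$ is \emph{not} dominated by bounded $r$. The terms with $r\le R$ total $O_R(t^{R-1})$, whereas the single term $r=R+1$ equals $t^{R}/(R!\,q_{R+1}^{q_{R+1}})$. So for each fixed $R$, the part of $\frac{x}{\log x}\Psi(\log_2x)$ coming from $r>R$ is $(1-o(1))$ of the whole. The peak sits at $r\approx\log_2t/\log2$, i.e.\ $r\approx\log_4x/\log2$ (not $\log t$), and $\log\Psi(t)\sim(\log t)(\log_2t)/\log2$. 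In particular $\Psi$ does not grow like $e^{ct}$. Consequently your claim that the terms with $r>R$ contribute at most $\eta(R)\frac{x}{\log x}\Psi(\log_2x)$ with $\eta(R)\to0$ is false. Landau's theorem for fixed $r\le R$ captures only a negligible part of $E(x)$.

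The Hardy--Ramanujan inequality cannot repair this. It is an upper bound with an unspecified constant, so even with $\Psi(t+C)/\Psi(t)\to1$ it gives only $E(x)\ll\frac{x}{\log x}\Psi(\log_2x)$ in the dominant range. It also supplies no lower bound there.

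The missing ingredient is an \emph{asymptotic} count of squarefree $m\le y$ with $\omega(m)=r$ and $p\nmid m$, uniform in $r$ over a growing range that contains the peak. The paper obtains this in four steps.
\begin{enumerate}
\item It takes the count from the uniform Sathe--Selberg theorem for $1\le r\le C\log_3y$.
\item It proves $B_{p,c}(y)\sim\frac{y}{\log y}\frac{(\log_2y)^{r_{p,c}-1}}{(r_{p,c}-1)!}$ uniformly in $p,c$. The upper bound counts all $m$ with $\omega(m)\le r_{p,c}$, which also disposes of your class (a).
\item It restricts to $p\le2\log_2x/\log_3x$, so that $r_{p,c}=O(\log_3x)$ stays in that range.
\item It bounds the larger $p$ trivially by $2x\sum_{p>P}p^{-p}$.
\end{enumerate}

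Your class (b) has the same problem in milder form. The relative weight of pairs with $p>q_r$ is $O(1/q_r)$, which is not $o(1)$ uniformly in $r$; for $r=1$ it is a fixed positive constant. It becomes negligible only because the mass of $\Psi(t)$ moves to $r\to\infty$. That is exactly how the paper passes from $a_r=q_r^{-q_r}(1+O(1/q_r))$ to $F(t)\sim\Psi(t)$.
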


The first terms of the function in \eqref{eq:Psi-definition} are
\[
 \Psi(t)=\frac1{3^3}+\frac{t}{5^5}
          +\frac{t^2}{2!\,11^{11}}+\frac{t^3}{3!\,17^{17}}+\cdots.
\]
They correspond to $3^3$ times a prime, $5^5$ times a product of two
distinct primes, $11^{11}$ times a product of three distinct primes,
and so on, with the displayed prime excluded from the squarefree
part.  The transitions occur slowly: even the second term of
$\Psi(\log_2x)$ exceeds the first only when $\log_2x>5^5/3^3$.
Theorem~\ref{thm:Psi-growth} quantifies the growth of
$\Psi$.  In particular, $E(x)\log x/x$ tends to infinity faster than
any fixed power of $\log_2x$.  If
\[
 A(x):=\#\{n\le x:n\text{ is matchable}\},
\]
then Corollary~\ref{cor:matchable-second-term} gives a second term
after $\alpha x$ in the asymptotic formula for $A(x)$.

Section~\ref{sec:vandermonde} gives the determinant construction, and
Section~\ref{sec:strong} deduces the characterization of strongly
matchable integers.  Section~\ref{sec:progressions} proves the full
progression criterion and characterizes the common differences for
which every primitive progression admits a matching.
Section~\ref{sec:ordinary} derives the arithmetic criteria for
matchability, describes the matchable non-$M$-numbers, and proves
that matchability passes to divisors.
Sections~\ref{sec:exceptional-count} and~\ref{sec:dominant-families} establish the asymptotic count and
describe the families that supply its main term.

\section{A coprime matching from a Vandermonde determinant}
\label{sec:vandermonde}

We begin with the algebraic construction.  The exponent bounds are
needed only at primes not dividing the common difference, a fact we
will use in Section~\ref{sec:progressions}.

\begin{lemma}\label{lem:vandermonde}
Let
\[
 m:=\prod_{i=1}^r p_i^{h_i-1},
\]
where $p_1,\ldots,p_r$ are distinct primes and the $h_i$ are positive
integers, and put
\[
 Q:=\tau(m)=\prod_{i=1}^r h_i.
\]
Let
\[
 \Pcal:=\{a+bt:t=0,1,\dots,Q-1\},\qquad
 a\in\mathbb Z,\quad b\in\mathbb Z_{>0},\quad \gcd(a,b)=1.
\]
If $h_i\le p_i$ whenever $p_i\nmid b$, then $D(m)$ admits a coprime
matching with $\Pcal$.
\end{lemma}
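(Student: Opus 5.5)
The plan is to turn the existence of a coprime matching into the non-vanishing of a determinant. Index the rows of a $Q\times Q$ matrix $A$ by $t=0,1,\ldots,Q-1$, i.e.\ by the terms $x_t:=a+bt$ of $\Pcal$. Index the columns by the divisors $d=\prod_i p_i^{e_i}$ of $m$, with $0\le e_i\le h_i-1$. The entries will be chosen so that $A_{t,d}\ne0$ if and only if $\gcd(x_t,d)=1$. Then $\det A\ne0$ forces some permutation term $\prod_t A_{t,\sigma(t)}$ to be nonzero, and that $\sigma$ is a coprime matching. Since $\gcd(a,b)=1$, a prime $p_i\mid b$ never divides any $x_t$, so only the primes with $p_i\nmid b$ impose any zero pattern; for those we have $h_i\le p_i$.

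For each $i$ with $p_i\nmid b$, the map $t\mapsto x_t \bmod p_i$ is a bijection on residues. Hence there is a unique class $t\equiv t_i \pmod{p_i}$ with $p_i\mid x_t$. I will use an entry of product form
\[
 A_{t,d}=\prod_{i=1}^r f_{i,e_i}(t),
\]
where $f_{i,0}\equiv1$ and, for $e\ge1$, $f_{i,e}(t)$ vanishes exactly on that class. The natural choice is to fix a variable $w$ and take $f_{i,e}(t)$ built from $u_i(t):=w^{(t-t_i)\bmod p_i}-1$. This is zero precisely when $p_i\mid x_t$, since the exponent then lies in $\{0,\ldots,p_i-1\}$ and is $0$ exactly on the bad class. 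Together with the constant function, the $h_i$ functions $f_{i,0},\ldots,f_{i,h_i-1}$ should span the same space as $1,\lambda_i(t),\ldots,\lambda_i(t)^{h_i-1}$ for a suitable single-valued quantity $\lambda_i(t)$. That span identity is the \emph{invertible change of columns}, done one prime at a time, and the Kronecker product of these changes is invertible. For primes dividing $b$ I simply take $f_{i,e}(t)=\lambda_i(t)^e$.

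After the column change the entry becomes $\prod_i\lambda_i(t)^{e_i}$. I want this to equal $y_t^{\,k(d)}$, where $k(d)=\sum_i e_iH_i$ with $H_i=\prod_{j<i}h_j$ is the mixed-radix encoding, a bijection from divisors onto $\{0,\ldots,Q-1\}$. That requires $\lambda_i(t)=y_t^{H_i}$ for one quantity $y_t$ per row, for instance a suitable power of $w$ or $w$ specialized to a transcendental. The matrix is then the Vandermonde matrix $(y_t^{k})$. Its determinant is nonzero provided the $y_t$ are distinct, which I plan to get from the fact that distinct $t<Q\le\prod h_i\le\prod p_i$ (over the relevant primes) give distinct residue data. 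For the primes dividing $b$, distinctness can be arranged freely.

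The main obstacle is the compatibility in the middle step. I must choose $y_t$ so that, for every $i$ with $p_i\nmid b$, the span of the functions vanishing on the bad class, together with $1$, is exactly the span of $1,y_t^{H_i},\ldots,y_t^{(h_i-1)H_i}$ restricted to the $Q$ rows. This is where $h_i\le p_i$ must enter: there are only $p_i$ residue classes, and one needs $h_i-1\le p_i-1$ independent functions vanishing on one class. My first attempt is to define $f_{i,e}$ directly as $\lambda_i^e-\lambda_i(t_i)^e$, with $\lambda_i$ depending only on $t\bmod p_i$. This makes the change of columns triangular (subtract the $e=0$ column), and it leaves the task of finding a single $y_t$ with $y_t^{H_i}$ depending only on $t\bmod p_i$, distinct across rows. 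If that rigid choice fails, I would fall back to treating the entries as independent indeterminates subject to the zero pattern, and show that $\det A$ has a nonvanishing monomial via a leading-term argument.
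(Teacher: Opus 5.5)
Your reduction is sound: a matrix whose nonzero entries sit exactly at coprime pairs, so that $\det\neq0$ yields a matching, with one factor per prime that vanishes precisely on the multiples of $p_i$. The gap is the step meant to prove $\det A\neq0$, which cannot work as designed.

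\textbf{Why the row-node Vandermonde fails.} Since $H_1=1$, your scheme forces $y_t=\lambda_1(t)$. Suppose the first prime in your ordering does not divide $b$. In your rigid version $\lambda_1$ depends only on $t\bmod p_1$, so the $Q$ nodes $y_t$ take at most $p_1$ values and two of them coincide once $Q>p_1$. For $m=6$, $b=1$ this already happens under either ordering.

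No other choice of the $f_{i,e}$ rescues the scheme. After any per-prime (Kronecker) column change, let $g_{i,e}$ be the transformed functions. Comparing the columns $e=0$ and $e=\delta_i$ gives $y_t^{H_i}=g_{i,1}(t)/g_{i,0}(t)$. On the rows where $p_i\mid x_t$, every $f_{i,e}$ with $e\ge1$ vanishes, so each $g_{i,e}$ is constant there and so is this ratio. Hence $y_t$ is constant on the terms divisible by the first prime. Whenever that prime divides two terms of $\Pcal$, two Vandermonde nodes coincide. For $m=210$ and $\Pcal=[1,16]$ every prime divides at least two terms, so no ordering works.

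Your fallback is circular. With independent indeterminates, the monomials of $\det A$ correspond bijectively to coprime matchings, so exhibiting a surviving monomial is exactly the original problem.

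\textbf{The missing idea: transpose the Vandermonde.} Let the row index $t$ be the exponent, and attach one node to each divisor. Take $y_i(t)=\zeta_i^{a+bt}-1$ for $p_i\nmid b$, where $\zeta_i\neq1$ is a $p_i$th root of unity, and $y_i(t)=p_i^t$ for $p_i\mid b$. Binomial expansion writes $y_i(t)^e$ as a combination of $(\zeta_i^{bc})^t$ for $0\le c\le e$. So the matrix with entries $\prod_i y_i(t)^{\nu_{p_i}(d)}$ factors as $VU$. Here $U$ is triangular with respect to divisibility, with nonzero diagonal, and $V_{t,f}=\lambda_f^{\,t}$ with $\lambda_f=f_1\prod_{p_i\nmid b}\zeta_i^{b\nu_{p_i}(f)}$, where $f_1$ is the part of $f$ at primes dividing $b$.

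For a single prime this agrees with your picture, because $(\zeta^{bt})^c=(\zeta^{bc})^t$. Across primes, $\prod_i(\zeta_i^{bc_i})^t$ is again one geometric sequence in $t$; this is exactly what your row-based mixed-radix encoding could not produce.

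The hypothesis $h_i\le p_i$ then enters as distinctness of the nodes, not of the rows. Absolute values separate the factors $f_1$. Raising $\prod_{p_i\nmid b}\zeta_i^{b\Delta_i}=1$ to the power $\prod_{i\neq j,\,p_i\nmid b}p_i$ gives $p_j\mid\Delta_j$. Since $|\Delta_j|\le h_j-1<p_j$, this forces $\Delta_j=0$.
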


\begin{proof}
If $r=0$, then $m=Q=1$, and the only divisor, $1$, is coprime
to the only term of the progression.  Assume $r\ge1$ and fix
a progression $\Pcal$ as in the statement.
Write $[r]:=\{1,2,\dots,r\}$ and split the indices into
\[
 I_0:=\{i\in[r]:p_i\nmid b\},\qquad
 I_1:=\{i\in[r]:p_i\mid b\}.
\]
For each $i\in I_0$, choose a complex number $\zeta_i$ with
$\zeta_i^{p_i}=1$ and $\zeta_i\ne1$.  Since $p_i$ is prime,
$\zeta_i^k=1$ for an integer $k$ exactly when $p_i\mid k$.
For $i\in[r]$, define
\[
 y_i(t):=
 \begin{cases}
  \zeta_i^{a+bt}-1,&i\in I_0,\\
  p_i^t,&i\in I_1.
 \end{cases}
\]
We list the divisors of $m$ in increasing order whenever they index
a matrix.  Consider the $Q\times Q$ matrix $M$, with rows indexed by
$t=0,1,\dots,Q-1$ and columns indexed by the divisors $d\mid m$,
whose entries are
\begin{equation}\label{eq:M-entry}
 M_{t,d}:=\prod_{i=1}^r y_i(t)^{\nu_{p_i}(d)}.
\end{equation}
We use the convention $0^0=1$ in \eqref{eq:M-entry}.

For $i\in I_0$, the choice of $\zeta_i$ gives
\[
 y_i(t)=0
 \quad\Longleftrightarrow\quad
 p_i\mid a+bt.
\]
For $i\in I_1$, the relations $p_i\mid b$ and $\gcd(a,b)=1$ imply
$p_i\nmid a$, hence $p_i\nmid a+bt$ for every $t$.  Consequently
\begin{equation}\label{eq:support}
 M_{t,d}\ne0
 \quad\Longleftrightarrow\quad
 \gcd(a+bt,d)=1.
\end{equation}
Thus a nonzero entry in row $t$ and column $d$ corresponds
exactly to a coprime pair $(a+bt,d)$.

It remains to prove that $\det M\ne0$.  For $i\in I_0$ and
$0\le e<h_i$,
\begin{align}
 y_i(t)^e
 &=\bigl(\zeta_i^{a+bt}-1\bigr)^e \notag\\
 &=\sum_{c=0}^e(-1)^{e-c}\binom ec
   \zeta_i^{ac}\bigl(\zeta_i^{bc}\bigr)^t.
 \label{eq:triangular-change}
\end{align}
For $i\in I_1$ we simply have
\[
 y_i(t)^e=(p_i^e)^t.
\]

For each divisor $f\mid m$, write uniquely $f=f_0f_1$, where
$\gcd(f_0,b)=1$ and $\rad(f_1)\mid\rad(b)$.  Thus $f_1$ is precisely
the product of the prime-power factors of $f$ whose primes divide
$b$.  Set
\begin{equation}\label{eq:nodes}
 \lambda_f:=
 f_1\prod_{i\in I_0}\zeta_i^{b\nu_{p_i}(f)},
\end{equation}
and let $V$ be the matrix
\[
 V_{t,f}:=\lambda_f^{\,t}.
\]
Expanding \eqref{eq:M-entry} with \eqref{eq:triangular-change} gives
\begin{equation}\label{eq:MVA}
 M=VA,
\end{equation}
where $A$ is indexed by divisors $f,d\mid m$.  Its entry $A_{f,d}$
is zero unless
\[
 \nu_{p_i}(f)\le \nu_{p_i}(d)\quad(i\in I_0),
 \qquad
 \nu_{p_i}(f)=\nu_{p_i}(d)\quad(i\in I_1),
\]
and otherwise
\[
 A_{f,d}
 =\prod_{i\in I_0}
 (-1)^{\nu_{p_i}(d)-\nu_{p_i}(f)}
 \binom{\nu_{p_i}(d)}{\nu_{p_i}(f)}
 \zeta_i^{a \nu_{p_i}(f)}.
\]
Each coefficient $A_{f,d}$ is a product of coefficients contributed
by the individual primes.  For $i\in I_0$, these coefficients come
from \eqref{eq:triangular-change}; for $i\in I_1$, the exponent is
unchanged.  In particular, $A_{f,d}\ne0$ implies $f\mid d$.
With our ordering of the divisors, $A$ is therefore upper triangular,
and its diagonal entries are
\[
 A_{d,d}=\prod_{i\in I_0}\zeta_i^{a \nu_{p_i}(d)}\ne0.
\]
Hence $\det A\ne0$.

We next verify that the $Q$ numbers $\lambda_f$ in \eqref{eq:nodes} are distinct.
Suppose $\lambda_f=\lambda_g$ for two divisors $f,g\mid m$, and
write $f=f_0f_1$ and $g=g_0g_1$ as above.  Then
\begin{equation}\label{eq:node-equality}
 \frac{f_1}{g_1}
 =
 \prod_{i\in I_0}\zeta_i^{-b(\nu_{p_i}(f)-\nu_{p_i}(g))}.
\end{equation}
The left side is a positive rational number, while the right side has
absolute value $1$.  Hence both sides of \eqref{eq:node-equality} equal
$1$, and therefore $f_1=g_1$.
For $i\in I_0$, put
\[
 \Delta_i:=\nu_{p_i}(f)-\nu_{p_i}(g).
\]
Then
\[
 \prod_{i\in I_0}\zeta_i^{b\Delta_i}=1.
\]
Fix $j\in I_0$ and raise this identity to
\[
 P_j:=\prod_{\substack{i\in I_0\\ i\ne j}}p_i.
\]
All factors with $i\ne j$ disappear, and therefore
\[
 \zeta_j^{bP_j\Delta_j}=1.
\]
Because $p_j$ is prime, $p_j\nmid b$, and $p_j\nmid P_j$, we obtain
$p_j\mid\Delta_j$.  On the other hand,
\[
 |\Delta_j|\le h_j-1\le p_j-1,
\]
so $\Delta_j=0$.  Thus $\nu_{p_j}(f)=\nu_{p_j}(g)$ for every $j\in I_0$.
Hence $f_0=g_0$; together with $f_1=g_1$, this gives $f=g$.
Thus the numbers in \eqref{eq:nodes} are pairwise distinct.

The matrix $V$ is a Vandermonde matrix, with determinant
\[
 \det V=\prod_{\substack{f,g\mid m\\f<g}}(\lambda_g-\lambda_f)\ne0.
\]
Equation~\eqref{eq:MVA} now gives $\det M=\det V\det A\ne0$.
Finally,
\[
 \det M=\sum_{\sigma}\operatorname{sgn}(\sigma)
       \prod_{t=0}^{Q-1}M_{t,\sigma(t)},
\]
where $\sigma$ ranges over bijections from the row indices to the
divisors of $m$.  Since this sum is nonzero, at least one $\sigma$ has
\(\prod_t M_{t,\sigma(t)}\ne0\).  By \eqref{eq:support},
$\gcd(a+bt,\sigma(t))=1$ for every $t$.  The bijection therefore
gives a coprime matching between $\Pcal$ and $D(m)$.
\end{proof}

\begin{remark}\label{rem:det-certificate}
The determinant expansion works over any field: a square matrix
with nonzero determinant has a choice of nonzero entries containing
exactly one entry from each row and each column.  In particular,
suppose that its rows and columns are indexed by the two equally
large vertex sets of a bipartite graph, and that an entry is zero
whenever its corresponding vertices are not joined by an edge.
A nonzero determinant then gives a matching covering every vertex.  The construction above makes
the nonzero entries correspond to coprime pairs.
The bound $h_i\le p_i$ is used only to turn a congruence modulo $p_i$
into equality.  If $h_i>p_i$ for a prime $p_i\nmid b$, the relevant
powers of $\zeta_i$ repeat.  Primes dividing $b$ are distinguished by
the positive factors in \eqref{eq:nodes} and require no exponent
bound.
\end{remark}

\section{Strongly matchable numbers}
\label{sec:strong}

We now prove Theorem~\ref{thm:strong}.

\begin{proof}[Proof of Theorem~\ref{thm:strong}]
First suppose that $n$ is an $M$-number.  Write
\[
 n=\prod_{i=1}^r p_i^{\nu_{p_i}(n)},\qquad
 h_i:=\nu_{p_i}(n)+1.
\]
The $M$-number condition is precisely $h_i\le p_i$ for every $i$.
For every positive common difference, the hypotheses of
Lemma~\ref{lem:vandermonde} therefore hold with $m=n$.  The lemma gives
a coprime matching of $D(n)$ with every primitive arithmetic
progression of length $\tau(n)$.  Hence $n$ is strongly matchable.

For completeness, we include the short argument for the converse
implication already proved in \cite[Proposition~4]{McNewPomerance}.
Suppose that $n$ is not an $M$-number.  Then for some prime $p$,
\[
 \nu_p(n)\ge p.
\]
Put $N:=\tau(n)$ and consider an interval of $N$ consecutive positive
integers beginning at a multiple of $p$.  It contains
$\lceil N/p\rceil$ multiples of $p$.  Each would have to be matched
to a divisor of $n$ not divisible by $p$, but there are only
$N/(\nu_p(n)+1)$ such divisors.  Since
\[
 \frac{N}{\nu_p(n)+1}<\frac Np
 \le\left\lceil\frac Np\right\rceil,
\]
no coprime matching exists.
This interval is a primitive arithmetic progression, and therefore
$n$ is not strongly matchable.
\end{proof}

\Needspace{7\baselineskip}
The theorem also gives the following density result, strengthening
\cite[Proposition~6]{McNewPomerance}.

\begin{corollary}\label{cor:strong-density}
The strongly matchable integers have asymptotic density
\[
 \alpha=\prod_p\left(1-\frac1{p^p}\right)
 =0.72199023441955\ldots.
\]
\end{corollary}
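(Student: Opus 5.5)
By Theorem~\ref{thm:strong}, the strongly matchable integers are exactly the $M$-numbers, so it suffices to show that the set of $M$-numbers has asymptotic density $\alpha$. I would write the indicator of the $M$-numbers as a convolution and use the standard Möbius-type argument for sets defined by excluding multiples of a sparse set of integers.

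Concretely, let $B:=\{p^p : p \text{ prime}\}$. An integer $n$ is an $M$-number precisely when no element of $B$ divides $n$. Since the elements of $B$ are pairwise coprime, inclusion--exclusion over finite subsets $S$ of primes gives
\[
 \#\{n\le x: n\text{ is an }M\text{-number}\}
 =\sum_{S}(-1)^{|S|}\Bigl\lfloor \frac{x}{\prod_{p\in S}p^p}\Bigr\rfloor .
\]
Only the finitely many $S$ with $\prod_{p\in S}p^p\le x$ contribute. Writing $\lfloor y\rfloor=y+O(1)$, the main term is $x\sum_S(-1)^{|S|}/\prod_{p\in S}p^p$, and this sum converges absolutely to $\prod_p(1-p^{-p})=\alpha$ because $\sum_p p^{-p}<\infty$.

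The main obstacle, a mild one, is controlling the two error terms. The first is the tail $x\sum_{\prod>x}1/\prod p^p$, which is $o(x)$ as the tail of a convergent series. The second is the count of subsets $S$ with $\prod_{p\in S}p^p\le x$. I would bound this count by the number of squarefull integers up to $x$, which is $O(\sqrt{x})=o(x)$; more crudely, the number of such products up to $x$ is at most the number of integers $\le x$ of the form $\prod p^p$, which is $o(x)$ since these integers have density zero. An alternative that avoids this counting is a truncation argument: for fixed $P$, the integers not divisible by $p^p$ for any $p\le P$ form a union of residue classes modulo $\prod_{p\le P}p^p$, with density $\prod_{p\le P}(1-p^{-p})$. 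The integers that are excluded only by some $p>P$ number at most $x\sum_{p>P}p^{-p}$. Letting $x\to\infty$ and then $P\to\infty$ gives density $\alpha$. I would use this truncation version, since it is cleanest. The numerical value of $\alpha$ is taken from the introduction.
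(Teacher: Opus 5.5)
Your proposal is correct, and the truncation argument you settle on is exactly the paper's proof: reduce to $M$-numbers via Theorem~\ref{thm:strong}, use the Chinese remainder theorem for the primes $p\le P$, bound the integers excluded only by primes $p>P$ by $x\sum_{p>P}p^{-p}$, and let $P\to\infty$. The inclusion--exclusion variant you sketch first is also valid, since the products $\prod_{p\in S}p^p$ are distinct squarefull integers and so only $O(\sqrt{x})$ of them are at most $x$, but it is not needed.
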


\begin{proof}
By Theorem~\ref{thm:strong}, the strongly matchable integers are
exactly the integers not divisible by $p^p$ for any prime $p$.
For any fixed $P$, the Chinese remainder theorem shows that the
integers avoiding $p^p$ for primes $p\le P$ have density
$\prod_{p\le P}(1-p^{-p})$.  For every positive integer $X$, the
proportion of integers in $[1,X]$ divisible by $p^p$ for at least one
prime $p>P$ is at most $\sum_{p>P}p^{-p}$.  This bound tends to zero
as $P\to\infty$, because $\sum_p p^{-p}$ converges.  Taking
$P\to\infty$ gives the claimed product.
\end{proof}

\section{Matchings with prescribed arithmetic progressions}
\label{sec:progressions}

We first isolate an arithmetic consequence of the necessary
counting inequalities.  The lemma will be used here for primes not
dividing the common difference and again in the characterization
of matchable integers.

\begin{lemma}\label{lem:one-supercritical}
Let $n$ be a positive integer, put $N:=\tau(n)$, and let $p\mid n$
be a prime with $\nu_p(n)\ge p$.  Set
\[
 c:=\nu_p(n)+1-p,\qquad T:=\tau(n/p^{\nu_p(n)}).
\]
Then
\begin{equation}\label{eq:exceptional-prime-equivalence}
 \left\lfloor\frac Np\right\rfloor
       \le\frac{N}{\nu_p(n)+1}
 \quad\Longleftrightarrow\quad cT<p.
\end{equation}
At most one prime divisor of $n$ can satisfy both
$\nu_p(n)\ge p$ and the inequality on the left.
\end{lemma}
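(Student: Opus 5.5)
Write $h:=\nu_p(n)+1=p+c$, so that $N=hT$ and $c\ge1$. The right-hand side of the left inequality is $N/h=T$, an integer. The left inequality therefore says exactly that $\lfloor hT/p\rfloor\le T$, which is equivalent to $hT/p<T+1$, that is, $hT<p(T+1)$.

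Substituting $h=p+c$ gives $(p+c)T<pT+p$, which is $cT<p$. Every step is reversible, so this proves \eqref{eq:exceptional-prime-equivalence}. The only point needing care is the passage $\lfloor y\rfloor\le T\iff y<T+1$ for an integer $T$, which is immediate.

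For the uniqueness claim, suppose two distinct primes $p\ne q$ dividing $n$ both satisfy $\nu_p(n)\ge p$, $\nu_q(n)\ge q$, and the left inequality. Apply the equivalence to $p$. Since $q^{\nu_q(n)}$ divides $n/p^{\nu_p(n)}$, we get $T\ge\nu_q(n)+1\ge q+1$, and $c\ge1$. Hence $cT<p$ forces $q+1\le T<p$, so $q<p$.

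By symmetry, applying the same argument to $q$ gives $p<q$, a contradiction. So at most one such prime exists.

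There is no real obstacle here. The main thing to check is that $T$ is bounded below by the full factor $\nu_q(n)+1$ of the other prime, and that this factor exceeds $q$ because $\nu_q(n)\ge q$.
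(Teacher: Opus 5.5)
Your proof is correct and follows essentially the same route as the paper: the equivalence comes from $N=(p+c)T$ and the integrality of $T$ (the paper phrases it as $\lfloor N/p\rfloor=T+\lfloor cT/p\rfloor$), and uniqueness comes from $T\ge\nu_q(n)+1>q$ contradicting $cT<p$. The only cosmetic difference is that you apply the bound to both primes and appeal to symmetry, whereas the paper assumes $p<q$ and applies it once, to the smaller prime.
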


\begin{proof}
Since $N=(p+c)T$,
\[
 \left\lfloor\frac Np\right\rfloor
   =T+\left\lfloor\frac{cT}{p}\right\rfloor,
\]
which proves the equivalence.
If $p<q$ were two primes satisfying the stated conditions, then
$T=\tau(n/p^{\nu_p(n)})$ would be divisible by $\nu_q(n)+1$.
Consequently $T\ge\nu_q(n)+1>q>p$, contradicting $cT<p$ and $c\ge1$.
\end{proof}

We now construct the matching under the counting inequalities.
The exponent bounds in Lemma~\ref{lem:vandermonde} are required only
for primes not dividing the common difference.

\begin{proof}[Proof of Theorem~\ref{thm:progression}]
Necessity follows because every term of the progression divisible by
$p$ must be matched to one of the $N/(\nu_p(n)+1)$ divisors not divisible by $p$.

For sufficiency, assume \eqref{eq:progression-criterion} and put
$g:=\gcd(a,b)$.  If a prime $p\mid n$ divided $g$, then
$c_p(\Pcal)=N$, contradicting $N/(\nu_p(n)+1)<N$.
Hence $\gcd(g,n)=1$.  Dividing every term by $g$ does not change
which divisors of $n$ are coprime to it, or any of the counts $c_p$,
and gives a primitive progression.  We may therefore assume
$\gcd(a,b)=1$.

For every prime $p\mid n$ with $p\nmid b$,
\[
 \left\lfloor\frac Np\right\rfloor
 \le c_p(\Pcal)\le\frac{N}{\nu_p(n)+1}.
\]
Lemma~\ref{lem:one-supercritical} therefore shows that at most one
prime $p$ satisfies both $p^p\mid n$ and $p\nmid b$.

If no such prime exists, Lemma~\ref{lem:vandermonde} applies directly
with $m=n$.  Otherwise, let $p$ be the unique prime satisfying these
two conditions, and put
\[
 m:=\frac{n}{p^{\nu_p(n)}},\qquad
 T:=\tau(m),\qquad
 c:=\nu_p(n)+1-p.
\]
We have
\[
 c T<p,\qquad N=(p+c)T,
\]
and every prime $q\mid m$ with $q\nmid b$ satisfies
$\nu_q(m)+1\le q$.  No exponent bound is required at primes dividing
$b$.

The first $pT$ terms of $\Pcal$ split into the progressions
\[
 \mathcal B_r:=\{a+br+pbk:k=0,1,\ldots,T-1\},
 \qquad 0\le r<p.
\]
Since $p\nmid b$, the integers $a,a+b,\ldots,a+(p-1)b$ have
distinct remainders upon division by $p$.  Thus exactly one index
$r_0$ satisfies $p\mid a+br_0$.  The progression $\mathcal B_{r_0}$
contains $T$ multiples of $p$, and the other progressions contain none.  The assumed bound $c_p(\Pcal)\le T$
therefore implies that none of the final $c T$ terms is divisible
by $p$.

The progression
\[
 p^{-1}\mathcal B_{r_0}
 =\left\{\frac{a+br_0}{p}+bk:k=0,1,\ldots,T-1\right\}
\]
is primitive: any common divisor of its first term and $b$ would
divide both $a$ and $b$.  By Lemma~\ref{lem:vandermonde}, choose
a coprime matching from $D(m)$ to this progression.  Multiplying each
matched term by $p$ gives a coprime matching from $D(m)$ to
$\mathcal B_{r_0}$, since no divisor of $m$ is divisible by $p$.

For $r\ne r_0$, the progression $\mathcal B_r$ is primitive with
common difference $pb$: its first term is coprime to $b$ and is not
divisible by $p$.  Split the final $c T$ terms into the following
$c$ progressions:
\[
 \mathcal C_s:=
 \{a+b(pT+(s-1)T+k):k=0,1,\ldots,T-1\},
 \qquad 1\le s\le c.
\]
Each $\mathcal C_s$ is primitive with common difference $b$.
Lemma~\ref{lem:vandermonde} applies to $D(m)$ and each of these
$p-1+c=\nu_p(n)$ remaining progressions.  Indeed, since $p\nmid m$,
the required exponent bounds for primes dividing $m$ are the same
for common differences $b$ and $pb$.

Pair these progressions, in any order, with the sets
\[
 pD(m),p^2D(m),\ldots,p^{\nu_p(n)}D(m),
\]
where $p^eD(m):=\{p^e d:d\mid m\}$.
For the progression paired with $p^eD(m)$, replace each matched
divisor $d$ by $p^e d$.  Since none of the terms of these progressions
is divisible by $p$, every resulting pair remains coprime.  Together
with the matching from $D(m)$ to $\mathcal B_{r_0}$, these matchings
form the required bijection.
\end{proof}

We next fix the common difference and ask when every primitive
progression of length $\tau(n)$ with that difference can be matched
with $D(n)$.

\begin{corollary}\label{cor:fixed-step}
Let $n$ be a positive integer and fix $b\in\mathbb Z_{>0}$.
Then $D(n)$ admits a coprime matching with every primitive arithmetic
progression of length $\tau(n)$ and common difference $b$ if and only if
every prime $p$ for which $p^p\mid n$ divides $b$.
\end{corollary}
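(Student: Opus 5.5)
The plan is to deduce Corollary~\ref{cor:fixed-step} from Theorem~\ref{thm:progression}, treating the two directions separately.

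\emph{Sufficiency.} Assume every prime $p$ with $p^p\mid n$ divides $b$. Then for each prime $p_i\mid n$ with $p_i\nmid b$ we have $\nu_{p_i}(n)\le p_i-1$. With $h_i:=\nu_{p_i}(n)+1$ this is $h_i\le p_i$. So the hypotheses of Lemma~\ref{lem:vandermonde} hold with $m=n$ for every primitive progression of difference $b$, and the lemma gives the matching directly. No appeal to Theorem~\ref{thm:progression} is needed here.

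\emph{Necessity.} Suppose some prime $p$ has $p^p\mid n$ but $p\nmid b$. We exhibit a primitive progression of difference $b$ and length $N=\tau(n)$ that violates \eqref{eq:progression-criterion} at $p$.
\begin{enumerate}[label=\textup{(\arabic*)}]
 \item Choose the first term $a$ so that $p\mid a$ and $\gcd(a,b)=1$. By the Chinese remainder theorem we may take $a\equiv0\pmod p$ and $a\equiv1\pmod{q}$ for each prime $q\mid b$; this is consistent since $p\nmid b$.
 \item Since $p\nmid b$, the terms $a+bt$ divisible by $p$ are exactly those with $t\equiv0\pmod p$. Hence $c_p(\Pcal)=\lceil N/p\rceil$.
 \item Because $\nu_p(n)+1>p$, we get $\lceil N/p\rceil\ge N/p>N/(\nu_p(n)+1)$. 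Thus the necessary direction of Theorem~\ref{thm:progression}, which is just the counting argument, shows that no coprime matching exists.
\end{enumerate}

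The only point needing care is the choice of $a$ in step~(1), which must keep the progression primitive while forcing $p\mid a$. The Chinese remainder argument handles this.
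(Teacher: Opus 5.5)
Your proposal is correct and follows the paper's proof: sufficiency comes from Lemma~\ref{lem:vandermonde} with $m=n$, and necessity comes from the counting bound $c_p(\Pcal)=\lceil N/p\rceil>N/(\nu_p(n)+1)$ on a primitive progression whose first term is divisible by $p$. The only difference is that the paper simply takes $a=p$, which is already coprime to $b$ because $p\nmid b$, so your Chinese remainder step is correct but unnecessary.
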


\begin{proof}
If every prime $p$ satisfying $p^p\mid n$ divides $b$,
Lemma~\ref{lem:vandermonde} applies to every such progression.
Conversely, suppose $p^p\mid n$ and $p\nmid b$, and put $N:=\tau(n)$.
The primitive progression
\[
 \{p+bt:t=0,1,\ldots,N-1\}
\]
contains $\lceil N/p\rceil$ multiples of $p$.  Since
\[
 \frac{N}{\nu_p(n)+1}<\frac Np
 \le\left\lceil\frac Np\right\rceil,
\]
it violates the necessary condition in
Theorem~\ref{thm:progression}.
\end{proof}

\section{Characterization and consequences for matchable integers}
\label{sec:ordinary}
\label{sec:consequences}

For the initial interval $[1,N]$, the number of multiples of a prime
$p$ is $\lfloor N/p\rfloor$.  Thus
Theorem~\ref{thm:progression} immediately gives the equivalence
between matchability and
\eqref{eq:prime-criterion-intro}.
There is also a direct counting reason for the condition originally
conjectured in \cite{RecamanMO}.  For any $d>1$ dividing $n$, exactly
$\tau(n/d)$ divisors of $n$ are divisible by $d$.  In a coprime
matching they must be paired with integers not divisible by $d$,
of which there are
$N-\lfloor N/d\rfloor=\lceil(d-1)N/d\rceil$ in $[1,N]$.
The next lemma shows that the inequalities for prime $d$ already
imply all the others.

\Needspace{8\baselineskip}
\begin{lemma}\label{lem:equivalent-d}
For a positive integer $n$ with $N:=\tau(n)$, the following are
equivalent:
\begin{enumerate}[label=\textup{(\alph*)}]
 \item \eqref{eq:prime-criterion-intro} holds for every prime $p\mid n$;
 \item
 \begin{equation}\label{eq:d-criterion}
  \tau(n/d)\le
  \left\lceil\frac{(d-1)N}{d}\right\rceil
  \qquad (d\mid\rad(n),\ d>1).
 \end{equation}
\end{enumerate}
\end{lemma}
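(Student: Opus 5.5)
Direction (b)$\Rightarrow$(a) is immediate. Take $d=p$ prime in \eqref{eq:d-criterion}. Then $\tau(n/p)$ counts the divisors of $n$ divisible by $p$, so
\[
 \tau(n/p)=N-\frac{N}{\nu_p(n)+1},
 \qquad
 \left\lceil\frac{(p-1)N}{p}\right\rceil=N-\left\lfloor\frac Np\right\rfloor .
\]
The inequality $\tau(n/p)\le N-\lfloor N/p\rfloor$ therefore rearranges exactly to \eqref{eq:prime-criterion-intro}.

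For (a)$\Rightarrow$(b) I would avoid a direct number-theoretic manipulation and use the matching already proved. Assuming (a), Theorem~\ref{thm:progression} with $\Pcal=[1,N]$ (where $c_p(\Pcal)=\lfloor N/p\rfloor$) gives a coprime matching between $D(n)$ and $[1,N]$. Now fix $d\mid\rad(n)$ with $d>1$. Every divisor of $n$ divisible by $d$ must be matched to an element of $[1,N]$ sharing no prime with it, hence not divisible by any prime factor of $d$. In particular that partner is not divisible by $d$. The matching is injective, so
\[
 \tau(n/d)\le\#\{x\in[1,N]:d\nmid x\}=N-\lfloor N/d\rfloor=\lceil (d-1)N/d\rceil,
\]
which is \eqref{eq:d-criterion}. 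This is the direct counting argument in the paragraph preceding the lemma, now applied to an actual matching.

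The only potential obstacle is logical rather than technical. Theorem~\ref{thm:progression} is already proved in Section~\ref{sec:progressions}, so it may be invoked here without circularity. If a self-contained arithmetic proof were preferred instead, I would split into two cases. If $n$ is an $M$-number, the elementary bound $\tau(n/d)=N\prod_{p\mid d}\nu_p/(\nu_p+1)$ can be compared with $N(1-1/d)$, since $\nu_p/(\nu_p+1)\le 1-1/p$. Otherwise there is a unique supercritical prime by Lemma~\ref{lem:one-supercritical}, and that prime would need separate handling with $cT<p$. The route via the matching sidesteps all of this, so that is the route I would take.
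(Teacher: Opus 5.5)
Your proof is correct, and your direction (b)$\Rightarrow$(a) is the paper's. For (a)$\Rightarrow$(b) you take a genuinely different route.

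You go (a) $\Rightarrow$ matchable $\Rightarrow$ (b): Theorem~\ref{thm:progression} applied to $[1,N]$ gives the first step, and counting partners gives the second. This is valid and not circular, since the proof of Theorem~\ref{thm:progression} uses only Lemmas~\ref{lem:vandermonde} and~\ref{lem:one-supercritical}.

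The paper instead gives a two-line arithmetic argument. Pick any single prime $p\mid d$. Since $n/d\mid n/p$, one has $\tau(n/d)\le\tau(n/p)\le\lceil (p-1)N/p\rceil\le\lceil (d-1)N/d\rceil$, the last step because $p\le d$. This is the idea your fallback sketch misses. You do not need to control all primes of $d$ simultaneously, nor split into $M$-numbers and non-$M$-numbers, because one prime factor of $d$ already gives the bound.

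The paper's route keeps the equivalence (ii)$\Leftrightarrow$(iii) of Theorem~\ref{thm:characterization} elementary and independent of the Vandermonde machinery. Your route has the merit of showing plainly why (b) holds: it is a necessary condition, and (a) is a sufficient one. Like the paper's argument, it extends verbatim to every divisor $d>1$ of $n$. It even yields the stronger bound $\tau(n/d)\le\#\{x\in[1,N]:\gcd(x,d)=1\}$. The cost is that it makes a trivial monotonicity inequality depend on the deepest result of the paper.
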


\begin{proof}
For a prime $p\mid n$,
\[
 \tau(n/p)=N-\frac{N}{\nu_p(n)+1},
\]
while, because $N$ is an integer,
\[
 \left\lceil\frac{(p-1)N}{p}\right\rceil
 =N-\left\lfloor\frac{N}{p}\right\rfloor.
\]
Hence the $d=p$ instance of \eqref{eq:d-criterion} is exactly
\eqref{eq:prime-criterion-intro}.  Thus (b) implies (a).

Conversely, suppose (a), let $d\mid\rad(n)$ with $d>1$, and choose a prime
$p\mid d$.  Since $n/d$ divides $n/p$,
\[
 \tau(n/d)\le\tau(n/p).
\]
Using the prime case and $p\le d$ gives
\begin{align*}
 \tau(n/d)
 &\le\tau(n/p)\\
 &\le\left\lceil\frac{(p-1)N}{p}\right\rceil\\
 &\le\left\lceil\frac{(d-1)N}{d}\right\rceil.\qedhere
\end{align*}
\end{proof}

The same argument applies to every divisor $d>1$ of $n$, without
requiring $d$ to be squarefree.  Thus the equivalent criterion may
also be imposed for all $d\mid n$, $d>1$; the squarefree formulation
in Theorem~\ref{thm:characterization} retains the original conjecture.

\begin{proof}[Proof of Theorem~\ref{thm:characterization}]
Theorem~\ref{thm:progression}, applied to $[1,N]$, gives
(i)$\Leftrightarrow$(ii).  Lemma~\ref{lem:equivalent-d} gives
(ii)$\Leftrightarrow$(iii).

Suppose (ii).  If $n$ is not an $M$-number,
Lemma~\ref{lem:one-supercritical} gives a unique prime $p$ with
$p^p\mid n$, and \eqref{eq:exceptional-prime-equivalence} gives
\eqref{eq:structural-intro}.  Hence (iv) holds.
Conversely, if $n$ is an $M$-number, then
\[
 \left\lfloor\frac Np\right\rfloor
       \le\frac Np\le\frac{N}{\nu_p(n)+1}
 \qquad(p\mid n).
\]
In the other case of (iv), the same inequality holds at every prime
other than the unique $p$ with $p^p\mid n$.  At that prime, the
inequality is equivalent to \eqref{eq:structural-intro} by
Lemma~\ref{lem:one-supercritical}.  This proves
(iv)$\Rightarrow$(ii).
\end{proof}

For matchings with $[1,N]$, the partition can be written explicitly.
For a matchable non-$M$-number, write
\[
 n=p^{p+c-1}m,\qquad T:=\tau(m),\qquad cT<p,
\]
where $p$ is prime, $c$ is a positive integer, $m$ is an $M$-number,
and $p\nmid m$.  Partition $[1,pT]$ into
\[
 C_0:=\{p,2p,\ldots,Tp\},\qquad
 C_r:=\{r,r+p,\ldots,r+(T-1)p\}\quad(1\le r<p),
\]
and partition the remaining interval $[pT+1,(p+c)T]$ into
\[
 B_j:=\{pT+(j-1)T+1,\ldots,pT+jT\}\quad(1\le j\le c).
\]
Match $D(m)$ with $C_0$ by taking a coprime matching with $[1,T]$
and multiplying each matched term by $p$.
The other $p-1+c$ progressions contain no multiples of $p$ and are
matched with the sets $p^eD(m)$, $1\le e\le p+c-1$, exactly as in
the proof of Theorem~\ref{thm:progression}.

\Needspace{9\baselineskip}
Solving the inequality in Theorem~\ref{thm:characterization}(iv) for
$\nu_p(n)$ gives the following explicit classification.

\begin{corollary}\label{cor:explicit}
A positive integer $n$ is matchable if and only if either it is an
$M$-number, or it has the form $n=p^e m$, where $p$ is prime, $e$ is an integer,
$p\nmid m$, $m$ is an $M$-number, and, with $T:=\tau(m)$,
\[
 T\le p-1,\qquad
 p\le e\le p-1+\left\lfloor\frac{p-1}{T}\right\rfloor.
\]
In the second case $p$ is the unique prime for which $p^p\mid n$,
and $e=\nu_p(n)$ and $m=n/p^e$ are uniquely determined.
\end{corollary}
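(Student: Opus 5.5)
The plan is to derive this directly from Theorem~\ref{thm:characterization}(iv), together with the uniqueness part of Lemma~\ref{lem:one-supercritical}. The $M$-number alternative is identical in both statements, so only the second alternative needs work. I will translate condition \eqref{eq:structural-intro} into a bound on $e$.

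For the forward direction, suppose $n$ is matchable but not an $M$-number. By (iv), exactly one prime $p$ has $p^p\mid n$. Put $e:=\nu_p(n)\ge p$ and $m:=n/p^e$, so that $p\nmid m$ and $T:=\tau(m)=\tau(n/p^{\nu_p(n)})$. Every prime $q\mid m$ satisfies $q^q\nmid n$, and hence $q^q\nmid m$; thus $m$ is an $M$-number. Condition \eqref{eq:structural-intro} reads $cT<p$ with $c=e+1-p\ge1$. Since $cT$ and $p$ are integers, this is equivalent to $cT\le p-1$, that is, $c\le\lfloor (p-1)/T\rfloor$. This gives the upper bound $e\le p-1+\lfloor(p-1)/T\rfloor$. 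Because $c\ge1$, it also forces $T\le p-1$, and the lower bound $e\ge p$ holds by the choice of $p$.

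For the converse, take $n=p^em$ with the stated properties. Since $p\nmid m$, we have $\nu_p(n)=e\ge p$, so $p^p\mid n$. For any other prime $q$, $\nu_q(n)=\nu_q(m)<q$ because $m$ is an $M$-number. Hence $p$ is the unique prime with $p^p\mid n$. Reversing the computation above, the upper bound on $e$ gives $cT\le p-1<p$ with $c=e+1-p$, which is \eqref{eq:structural-intro}. Then (iv) holds, and $n$ is matchable.

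For the uniqueness claim, the prime $p$ is determined as the unique prime with $p^p\mid n$, as just shown. Then $e=\nu_p(n)$ is forced by $p\nmid m$, and $m=n/p^e$ follows. There is no real obstacle here. The only points needing care are the integer step $cT<p\iff c\le\lfloor(p-1)/T\rfloor$, and checking that the condition $T\le p-1$ is exactly what makes the range for $e$ nonempty, so that it is implied rather than an extra assumption.
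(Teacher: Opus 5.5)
Your proposal is correct and follows the paper's route: both reduce the second alternative of Theorem~\ref{thm:characterization}(iv) to the integer equivalences $(e+1-p)T<p\iff(e+1-p)T\le p-1\iff e\le p-1+\lfloor (p-1)/T\rfloor$, with $e+1-p\ge1$ forcing $T\le p-1$. You also correctly spell out details the paper leaves implicit: that $m$ is an $M$-number, that in the converse $p$ is the unique prime with $p^p\mid n$, and that $e$ and $m$ are uniquely determined.
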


\begin{proof}
For $e\ge p$, the condition in Theorem~\ref{thm:characterization}(iv)
is equivalent to
\[
 (e+1-p)T<p
 \quad\Longleftrightarrow\quad
 (e+1-p)T\le p-1
 \quad\Longleftrightarrow\quad
 e\le p-1+\left\lfloor\frac{p-1}{T}\right\rfloor.
\]
Since $e+1-p\ge1$, this also forces $T\le p-1$.
\end{proof}

In particular, for every prime $p$ and integer $e\ge0$,
\[
 p^e\text{ is matchable}\quad\Longleftrightarrow\quad e\le2p-2.
\]
For $e=0$, this follows because $1$ is matchable.  For $e\ge1$,
Theorem~\ref{thm:characterization} reduces to
$\lfloor(e+1)/p\rfloor\le1$, which is equivalent to $e+1<2p$.

For example, the only matchable integer divisible by $2^2$ is $4$.
The matchable integers divisible by $3^3$ are $27$, $81$, and $27q$,
where $q\ne3$ is prime.  The family $27q$ was already noted in
\cite{McNewPomerance}.

\Needspace{10\baselineskip}
\begin{proof}[Proof of Corollary~\ref{cor:divisor-closed-intro}]
Let $d\mid n$, where $n$ is matchable.  If $n$ is an $M$-number, then
so is $d$, and Theorem~\ref{thm:strong} applies.
Otherwise write
$n=p^{p+c-1}m$, where $p\nmid m$, $m$ is an $M$-number, and
$c\tau(m)<p$.  Write $d=p^e u$ with $u\mid m$.
If $e\le p-1$, then $d$ is an $M$-number.
If $e\ge p$, put $c':=e+1-p$.  Then $1\le c'\le c$ and
\[
 c'\tau(u)\le c\tau(m)<p.
\]
Theorem~\ref{thm:characterization}(iv) again shows that $d$ is matchable.
\end{proof}

This answers K\"onig's question from the MathOverflow discussion:
every divisor of a matchable integer is matchable.  McNew and Pomerance
observed that every $M$-number divides one of
\[
 M_P:=\prod_{\substack{q\le P\\q\ \mathrm{prime}}}q^{q-1}.
\]
Each $M_P$ is matchable by the Chinese remainder theorem: match
$d\mid M_P$ to the unique $t\in[1,\tau(M_P)]$ satisfying
$t\equiv\nu_q(d)\pmod q$ for every prime $q\le P$.  These assignments
are distinct, and $q\mid d$ implies $q\nmid t$, so the matched pairs
are coprime.  The result about divisors therefore implies that every
$M$-number is matchable.  Our proof that matchability passes to
divisors already uses Theorem~\ref{thm:strong}, which gives strong
matchability directly.

The explicit classification also recovers the density theorem of
McNew and Pomerance~\cite{McNewPomerance}.  Fix a prime $p$ and consider the matchable
integers $n$ for which $p^p\mid n$.  Write $n=p^e m$ with $p\nmid m$.
Corollary~\ref{cor:explicit} bounds both $e$ and $\tau(m)$.  If $\omega(m)$ denotes the number of distinct prime factors
of $m$, then $2^{\omega(m)}\le\tau(m)$, so $\omega(m)$ is bounded.
Integers with a bounded number of distinct prime factors have density
zero.  Since only finitely many exponents of $p$ occur, the
corresponding integers also form a set of density zero.
For every positive integer $X$, the proportion of integers in $[1,X]$
divisible by $p^p$ for some prime $p>P$ is at most
$\sum_{p>P}p^{-p}$.  Combining this bound with the density-zero result
for the finitely many primes $p\le P$, and then letting $P\to\infty$,
shows that the matchable non-$M$-numbers have density zero, in
agreement with \cite{McNewPomerance}.

Once the prime factorization of $n$ is known,
Theorem~\ref{thm:characterization} decides matchability by one
inequality for each prime divisor, without constructing a matching.
The proof also supplies a finite construction of a matching: the
nonzero determinant guarantees a choice of nonzero entries, one in
each row and column, and the progression partition combines the
resulting smaller matchings.  This construction does not provide
a simple closed formula for the required permutation.

The next two sections sharpen the density-zero statement to an
asymptotic formula for the exceptional count.

\section{Counting the matchable non-\texorpdfstring{$M$}{M}-numbers}
\label{sec:exceptional-count}

Recall that $E(x)$ counts the matchable non-$M$-numbers up to $x$.
By Corollary~\ref{cor:explicit}, each integer counted by $E(x)$ has a
unique representation
\[
 n=p^{p+c-1}m,
 \qquad 1\le c<p,\qquad p\nmid m,\qquad c\tau(m)<p,
\]
where $p$ is prime and $m$ is an $M$-number.  For a prime $p$, an integer
$c$ with $1\le c<p$, and a real number $y\ge0$, put
\[
 B_{p,c}(y):=\#\{m\le y:m\text{ is an }M\text{-number},\ 
                     p\nmid m,\ c\tau(m)<p\}.
\]
The uniqueness of the representation gives
\begin{equation}\label{eq:exceptional-sum}
 E(x)=\sum_p\sum_{1\le c<p}
       B_{p,c}\left(\frac{x}{p^{p+c-1}}\right).
\end{equation}
Terms with $p^{p+c-1}>x$ vanish.

Recall that $\omega(m)$ counts the distinct prime factors of $m$,
with $\omega(1)=0$, and let $\mu$ be the M\"obius function.  Thus
$\mu^2(m)=1$ precisely when $m$ is squarefree.  We use the following
consequence of the uniform Sathe--Selberg theorem; see
\cite[Theorem~4]{Selberg} and \cite[Section~II.6.1]{Tenenbaum}.

\begin{proposition}[Sathe--Selberg]\label{prop:sathe-selberg}
Fix $C>0$.  As $y\to\infty$, uniformly for integers
$1\le k\le C\log_3y$, we have
\begin{align}
 \#\{m\le y:\omega(m)=k\}
   &=(1+o(1))\frac{y}{\log y}
                    \frac{(\log_2y)^{k-1}}{(k-1)!},
       \label{eq:sathe-omega}\\
 \#\{m\le y:\mu^2(m)=1,\ \omega(m)=k\}
   &=(1+o(1))\frac{y}{\log y}
                    \frac{(\log_2y)^{k-1}}{(k-1)!}.
       \label{eq:sathe-squarefree}
\end{align}
\end{proposition}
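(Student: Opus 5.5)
The plan is to deduce both formulas from the uniform Sathe--Selberg estimate in the form given in \cite[Theorem~4]{Selberg} and \cite[Section~II.6.1]{Tenenbaum}, and then to check that the correction factor tends to $1$ in our narrow range of $k$. That estimate states that for any fixed $A<2$, uniformly for $1\le k\le A\log_2y$,
\[
 \#\{m\le y:\omega(m)=k\}
 =\frac{y}{\log y}\frac{(\log_2y)^{k-1}}{(k-1)!}
 \left(G\!\left(\frac{k-1}{\log_2y}\right)+O\!\left(\frac{k}{(\log_2y)^2}\right)\right),
\]
where
\[
 G(z)=\frac1{\Gamma(z+1)}\prod_p\left(1+\frac{z}{p-1}\right)\left(1-\frac1p\right)^{z}.
\]
The Euler product converges for $|z|<2$, and $G$ is holomorphic there.

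The squarefree count is handled by the same Selberg--Delange method applied to $\sum_m\mu^2(m)z^{\omega(m)}m^{-s}=\zeta(s)^zH(z;s)$, with
\[
 H(z;s)=\prod_p\left(1+\frac{z}{p^s}\right)\left(1-\frac1{p^s}\right)^z.
\]
This $H$ is holomorphic and uniformly bounded for $\Re s>1/2+\varepsilon$ and $|z|\le A$, for any fixed $A$. Tenenbaum's general theorem (II.6.1, Theorem~6.5 type) therefore yields the analogous formula with
\[
 G_2(z)=\frac{1}{\Gamma(z+1)}\prod_p\left(1+\frac zp\right)\left(1-\frac1p\right)^z,
\]
uniformly for $1\le k\le A\log_2y$ and any fixed $A$. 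Both results are quoted as black boxes.

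Fix $C>0$ and restrict to $1\le k\le C\log_3y$. For large $y$ this range lies inside $k\le\log_2y$, so the quoted uniform estimates apply. In this range
\[
 z_k:=\frac{k-1}{\log_2y}\le \frac{C\log_3y}{\log_2y}\to0
\]
uniformly in $k$. Since $G$ and $G_2$ are continuous at $0$ with $G(0)=G_2(0)=1$ (the products are empty at $z=0$ and $\Gamma(1)=1$), we get $G(z_k),G_2(z_k)=1+o(1)$ uniformly. The error terms are $O(k/(\log_2y)^2)=O(\log_3y/(\log_2y)^2)=o(1)$, also uniformly. Combining these gives \eqref{eq:sathe-omega} and \eqref{eq:sathe-squarefree}.

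The only point needing care is the uniformity in $k$. The relevant input is that the quoted theorems hold on a range of the form $k\le A\log_2y$, which is much larger than $C\log_3y$. I would also verify that the constant $A$ can be chosen below the radius of convergence for the first product, which has singularities coming from the prime $p=2$. Taking $A=1$ suffices. Everything else is continuity at $z=0$.
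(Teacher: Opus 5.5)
Your proposal is correct and follows the paper's argument. You quote the uniform Sathe--Selberg/Selberg--Delange estimates (for $\omega$, and for the squarefree series $\prod_p(1+zp^{-s})$), then observe that $(k-1)/\log_2 y\to0$ uniformly for $k\le C\log_3 y$, so the analytic factors tend to $1$ and the relative errors $O(k/(\log_2 y)^2)$ vanish; one harmless slip is that the Euler product attached to $\omega$ is entire in $z$ (the restriction $A<2$ and the obstruction at $p=2$ belong to the $\Omega$ version), so your quoted range is merely more cautious than necessary.
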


\begin{proof}
For the squarefree estimate, Selberg's theorem applies to the
generating series
\[
 \sum_{m\ge1}\frac{\mu^2(m)z^{\omega(m)}}{m^s}
       =\prod_p\left(1+\frac{z}{p^s}\right),
 \qquad z,s\in\mathbb C,\quad \operatorname{Re}s>1.
\]
The classical uniform formulas contain analytic factors evaluated at
$(k-1)/\log_2y$.  In the stated range this argument tends uniformly
to zero, and both factors tend to $1$.  The relative error terms in
the cited estimates also tend uniformly to zero, which gives the
two assertions.
\end{proof}

For $1\le c<p$, define
\begin{equation}\label{eq:divisor-count-parameters}
 K_{p,c}:=\left\lfloor\frac{p-1}{c}\right\rfloor,
 \qquad
 r_{p,c}:=\left\lfloor\frac{\log K_{p,c}}{\log2}\right\rfloor.
\end{equation}
The condition $c\tau(m)<p$ is equivalent to $\tau(m)\le K_{p,c}$.
Since $\tau(m)\ge2^{\omega(m)}$, it implies
$\omega(m)\le r_{p,c}$.  The next lemma shows that the principal
contribution comes from squarefree integers with exactly $r_{p,c}$
prime factors.

\begin{lemma}\label{lem:bounded-divisor-count}
Fix $C>0$.  As $y\to\infty$, uniformly for primes $p$ and integers
$1\le c<p$ such that $1\le r_{p,c}\le C\log_3y$, we have
\begin{equation}\label{eq:bounded-divisor-count}
 B_{p,c}(y)=(1+o(1))\frac{y}{\log y}
                 \frac{(\log_2y)^{r_{p,c}-1}}{(r_{p,c}-1)!}.
\end{equation}
\end{lemma}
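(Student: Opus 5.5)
We prove \eqref{eq:bounded-divisor-count} with a lower bound and an upper bound that agree to leading order; write $r:=r_{p,c}$ and $K:=K_{p,c}$, so $2^r\le K<2^{r+1}$. For the lower bound, consider squarefree $m\le y$ with $\omega(m)=r$ and $p\nmid m$. Such $m$ has $\tau(m)=2^r\le K$, so $c\tau(m)<p$. It is also an $M$-number, since squarefree integers are $M$-numbers. Hence
\[
 B_{p,c}(y)\ge \#\{m\le y:\mu^2(m)=1,\ \omega(m)=r\}
   -\#\{m\le y:\mu^2(m)=1,\ \omega(m)=r,\ p\mid m\}.
\]
The first count is given uniformly by \eqref{eq:sathe-squarefree}, because $1\le r\le C\log_3y$. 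The subtracted term is at most $\#\{m'\le y/p:\omega(m')=r-1\}$, with $m'=m/p$.
\begin{itemize}
 \item If $r=1$, this is at most $1$.
 \item If $r\ge2$, it is $O\bigl((y/p)(\log_2y)^{r-2}/((r-2)!\log(y/p))\bigr)$ when $y/p$ is large. This is $o$ of the main term, since $p\ge2$ and the ratio $(r-1)/\log_2 y$ tends to $0$.
\end{itemize}
The only care needed is uniformity when $p$ is close to $y$. In that case we bound the count trivially by $y/p$. Since $p>K\ge 2^r$, we have $y/p\le y/2^r$, but this is not enough if $p$ is small. So we split into $p\le \sqrt y$, where Sathe--Selberg applies with $\log(y/p)\asymp\log y$, and $p>\sqrt y$, where $y/p<\sqrt y$ is negligible.

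For the upper bound, every $m$ counted has $\tau(m)\le K<2^{r+1}$, hence $\omega(m)\le r$. We split the counted $m$ into three classes.
\begin{enumerate}
 \item Those with $\omega(m)=k<r$. By \eqref{eq:sathe-omega}, summed over $1\le k\le r-1$ (plus $m=1$), these contribute $\frac{y}{\log y}\sum_{k<r}\frac{(\log_2y)^{k-1}}{(k-1)!}$. The ratio of consecutive terms is $(k-1)/\log_2y\le C\log_3y/\log_2y\to0$, so the sum is $o$ of the $k=r$ term, uniformly.
 \item Those with $\omega(m)=r$ and $m$ squarefree. These are bounded above by \eqref{eq:sathe-squarefree} directly.
 \item Those with $\omega(m)=r$ and $m$ not squarefree. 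This is the main obstacle. If $q^2\mid m$, then $\tau(m)\ge 3\cdot2^{r-1}$, which does not contradict $\tau(m)<2^{r+1}$. So non-squarefree $m$ can genuinely occur, and we must show they contribute $o(\cdot)$. We would show that \eqref{eq:sathe-omega} and \eqref{eq:sathe-squarefree} have the same asymptotic, so their difference is $o$ of the main term uniformly in $k\le C\log_3y$. Both counts equal $(1+o(1))\frac{y}{\log y}\frac{(\log_2y)^{k-1}}{(k-1)!}$ with uniform $o(1)$ by Proposition~\ref{prop:sathe-selberg}. Subtracting gives that the number of non-squarefree $m\le y$ with $\omega(m)=k$ is $o\bigl(\frac{y}{\log y}\frac{(\log_2y)^{k-1}}{(k-1)!}\bigr)$, which bounds this class.
\end{enumerate}

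Adding the three classes gives the matching upper bound, and together with the lower bound this proves \eqref{eq:bounded-divisor-count}. Apart from the splitting at $p=\sqrt y$, the delicate point is keeping every $o(1)$ uniform. Both pieces of the upper bound, the tail sum over $k<r$ and the non-squarefree difference, rely only on the uniformity already built into Proposition~\ref{prop:sathe-selberg} over $k\le C\log_3 y$.
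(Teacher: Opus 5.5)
Your proof is correct and follows the paper's argument: the lower bound uses squarefree $m$ with $\omega(m)=r$, the multiples of $p$ are removed by splitting at $p=\sqrt y$, and the upper bound sums Sathe--Selberg over $k\le r$. The one difference is that your classes (2) and (3) together are exactly $\#\{m\le y:\omega(m)=r\}$, which \eqref{eq:sathe-omega} bounds directly, so the non-squarefree case is not an obstacle and the subtraction step is unnecessary (though harmless).
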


\begin{proof}
Write $r:=r_{p,c}$.  The upper bound follows from
\[
 B_{p,c}(y)\le1+\sum_{1\le k\le r}\#\{m\le y:\omega(m)=k\}
\]
and Proposition~\ref{prop:sathe-selberg}: the term with $k=r$
dominates, because $r/\log_2y=o(1)$.

For the lower bound, retain the squarefree integers with exactly $r$
prime factors.  They are $M$-numbers and satisfy
$\tau(m)=2^r\le K_{p,c}$.  It remains to exclude those divisible by
$p$.  If $r=1$, there is at most one such integer.  Suppose $r\ge2$.
For $p>y^{1/2}$, their number is at most $y/p<y^{1/2}$, which is
negligible compared with the main term in
\eqref{eq:bounded-divisor-count}.  For $p\le y^{1/2}$, write $m=pb$.
Then $b\le y/p$ and $\omega(b)=r-1$.  Since $y/p\ge y^{1/2}$,
we have $r-1=O(\log_3(y/p))$, so
Proposition~\ref{prop:sathe-selberg} applies uniformly at $y/p$.
It bounds their number by
\[
 O\left(\frac{r}{p\log_2y}\right)
       \frac{y}{\log y}\frac{(\log_2y)^{r-1}}{(r-1)!}.
\]
Here $\log(y/p)\ge\tfrac12\log y$ and
$\log_2(y/p)\le\log_2y$, so the estimate is uniform.
Equation~\eqref{eq:sathe-squarefree} now gives the lower bound.
\end{proof}

For $r\ge1$, put
\begin{equation}\label{eq:counting-coefficients}
 a_r:=\sum_{\substack{p\text{ prime},\ c\ge1\\
                       2^rc<p\le2^{r+1}c}}
          \frac1{p^{p+c-1}},
 \qquad
 F(t):=\sum_{r\ge1}a_r\frac{t^{r-1}}{(r-1)!}.
\end{equation}
The bound $a_r\le2\sum_{n>2^r}n^{-n}$ shows that the series defining
$F$ converges for every complex $t$.  These coefficients first give
an asymptotic that includes all choices of $p$ and $c$.

\begin{theorem}\label{thm:count-F}
As $x\to\infty$,
\begin{equation}\label{eq:count-F}
 E(x)\sim\frac{x}{\log x}F(\log_2x).
\end{equation}
\end{theorem}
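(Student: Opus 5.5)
The plan is to start from \eqref{eq:exceptional-sum} and apply Lemma~\ref{lem:bounded-divisor-count} to each summand, with $y=x/p^{p+c-1}$. First we rewrite $F$ in terms of pairs $(p,c)$. The condition $2^rc<p\le 2^{r+1}c$ says $2^r\le (p-1)/c<2^{r+1}$, that is, $r_{p,c}=r$ (check: $p>2^rc$ iff $p-1\ge 2^rc$ iff $K_{p,c}\ge 2^r$). Hence
\[
 F(t)=\sum_p\sum_{\substack{1\le c<p\\ r_{p,c}\ge1}}\frac{1}{p^{p+c-1}}\frac{t^{r_{p,c}-1}}{(r_{p,c}-1)!}.
\]
Pairs with $r_{p,c}=0$, i.e.\ $K_{p,c}=1$, force $\tau(m)=1$, so $m=1$. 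They contribute at most the number of pairs with $p^{p+c-1}\le x$, which is $O((\log x)^2)$ and negligible. Thus \eqref{eq:count-F} reduces to showing that $\sum_{p,c}B_{p,c}(x/p^{p+c-1})$ equals the weighted sum above at $t=\log_2x$, times $(1+o(1))x/\log x$.

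Next we split at a parameter $P=P(x)$ growing slowly, say with $P\le \log_4 x$.

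For pairs with $p\le P$ (hence $c<P$), we have $p^{p+c-1}\le P^{2P}=x^{o(1)}$. So $y=x/p^{p+c-1}$ satisfies $\log y\sim\log x$ and $\log_2 y=\log_2x+o(1/\log x)$, and $r_{p,c}\le\log_2 P\le C\log_3y$. Lemma~\ref{lem:bounded-divisor-count} then gives each term as $(1+o(1))\,p^{-(p+c-1)}\frac{x}{\log x}\frac{(\log_2x)^{r-1}}{(r-1)!}$, uniformly. Replacing $\log_2y$ by $\log_2x$ costs a factor $(1+O(1/(\log x\log_2 x)))^{r}=1+o(1)$. Summing over these finitely many pairs (finitely many for each $x$, with uniform error) gives $(1+o(1))\frac{x}{\log x}F_{\le P}(\log_2x)$.

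The tail is where I expect the main obstacle. We must show that both the tail of $F(\log_2x)$ and the counts from pairs with $p>P$ are $o(F(\log_2x))\,x/\log x$. For the counts, use the crude bound $B_{p,c}(y)\le 1+\sum_{k\le r}\#\{m\le y:\omega(m)=k\}$, together with a uniform Hardy--Ramanujan bound $\#\{m\le y:\omega(m)=k\}\ll \frac{y}{\log y}\frac{(\log_2y+O(1))^{k-1}}{(k-1)!}$. When $y<\sqrt x$, bound trivially by $y$ and use $\sum_{p>P}p^{-p}x\ll x P^{-P}$; the range $y\ge\sqrt x$ keeps $\log y\gg\log x$. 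Summing gives
\[
 \ll \frac{x}{\log x}\sum_{p>P}\sum_c p^{-(p+c-1)}\,r\,\frac{(\log_2x+O(1))^{r-1}}{(r-1)!},\qquad r\le\log_2 p .
\]
The required comparison is with the lower bound $F(\log_2x)\ge a_1\gg 1$, or better with $F(t)\ge a_R t^{R-1}/(R-1)!$ for the optimal $R$. The super-exponential decay $p^{-p}$ dominates $(t+O(1))^{\log_2p}$ growth: the ratio of the term at $p$ to the dominant term is at most $p^{-p/2}$ once $p\gg t$. If instead $P$ is small relative to the saddle index, I would choose $P$ depending on $t=\log_2x$ so that the saddle lies below $P$. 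That means taking $P$ with $P\log P\approx$ large multiple of $\log t$, which still makes $P^{2P}=x^{o(1)}$. The same estimate bounds the tail of $F$ itself.
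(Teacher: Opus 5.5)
Your reformulation of $F$ as a sum over pairs $(p,c)$ matches the paper, as do your treatment of the pairs with $r_{p,c}=0$ and your use of Lemma~\ref{lem:bounded-divisor-count} for small $p$. The difference lies in the cutoff, and hence in the tail.

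Your first choice, $P\le\log_4x$, cannot work. By the saddle-point computation behind Theorem~\ref{thm:Psi-growth} (with $s=\log_3x$), the terms dominating $F(\log_2x)$ come from $p\asymp\log_3x/\log_4x$. So the pairs with $p\le\log_4x$ account for only a proportion $o(1)$ of $F(\log_2x)$, and the ``tail'' carries essentially everything. Your fallback, $P\log P\approx K\log_3x$ with $K$ a large absolute constant, is therefore forced, and it does handle the middle range. The function $h(u):=-u\log u+\frac{\log u}{\log 2}\log_3x$ satisfies $h'(u)\le-\frac12\log u$ for $u\ge P$. Moreover, $h(P)$ lies below the logarithm of the largest term of $F(\log_2x)$ by at least $(K-\log K/\log2-O(1))\log_3x$.

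Your treatment of the range $y<\sqrt x$, however, fails for this $P$. Since $P^P=(\log_2x)^{K+o(1)}$, the bound $xP^{-P}$ is not $o(x/\log x)$. Replace it with the following observation: such pairs satisfy $\sqrt x<p^{p+c-1}\le x$, so each contributes $B_{p,c}(y)\le y<\sqrt x$, and there are only $O((\log x)^2)$ of them.

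The paper avoids all of this by pushing the cutoff as far as Lemma~\ref{lem:bounded-divisor-count} allows, namely $P=2\log_2x/\log_3x$. Since $r_{p,c}\ge1$ forces $c\le(p-1)/2$, every $p\le P$ still has $p^{p+c-1}\le p^{3p/2}\le(\log x)^{3+o(1)}$ and $r_{p,c}=O(\log_3x)$, so the lemma applies throughout that range. On the other hand, $P^P=(\log x)^{2+o(1)}$. The trivial bound $B_{p,c}(y)\le y$ therefore gives a contribution $\ll xP^{-P}=o(x/\log x)$ from $p>P$. The omitted part of $F(\log_2x)$ is $O(\exp(-(2+o(1))\log_2x))$. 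Both are compared only with $F(\log_2x)\ge a_1\ge3^{-3}$, so no Hardy--Ramanujan inequality, no location of the saddle, and no comparison with the largest term of $F$ are needed.

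Your repaired route front-loads the saddle analysis of Section~\ref{sec:dominant-families}. What it buys is a direct localization of the main contribution near $p\asymp\log_3x/\log_4x$, which the paper extracts only afterwards.
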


\begin{proof}
Put $L:=\log_2x$ and $P:=2L/\log L$.  First consider the terms in
\eqref{eq:exceptional-sum} with $p\le P$ and $r_{p,c}\ge1$.
The latter condition gives $c\le(p-1)/2$, and hence, uniformly,
\[
 p^{p+c-1}\le p^{3p/2}\le(\log x)^{3+o(1)}.
\]
Thus $y:=x/p^{p+c-1}$ lies between $x/(\log x)^4$ and $x$ for all
sufficiently large $x$, while
\[
 r_{p,c}\le\frac{\log p}{\log2}=O(\log_3x).
\]
Lemma~\ref{lem:bounded-divisor-count} applies uniformly.  Moreover,
$\log y\sim\log x$ and $\log_2y=L+O(L/\log x)$ uniformly, so
\begin{equation}\label{eq:counting-truncation}
 \sum_{\substack{p\le P,\ 1\le c<p\\r_{p,c}\ge1}}
 B_{p,c}\left(\frac{x}{p^{p+c-1}}\right)
 =(1+o(1))\frac{x}{\log x}
 \sum_{\substack{p\le P,\ 1\le c<p\\r_{p,c}\ge1}}
       \frac{L^{r_{p,c}-1}}{(r_{p,c}-1)!\,p^{p+c-1}}.
\end{equation}
The equality $r_{p,c}=r$ is equivalent to
$2^rc<p\le2^{r+1}c$.  The sum on the right is therefore the
corresponding truncation of $F(L)$.

If $r_{p,c}=0$, then $K_{p,c}=1$, and $m=1$ is the only possible
integer in $B_{p,c}$.  The terms that contribute have
$p\le\log x/\log2$ and $c<p$, so their total contribution is
$O((\log x)^2)$.

For $p>P$, use $B_{p,c}(y)\le y$ and sum the geometric series in $c$:
\[
 \sum_{p>P}\sum_{1\le c<p}
 B_{p,c}\left(\frac{x}{p^{p+c-1}}\right)
 \le2x\sum_{p>P}p^{-p}
 \ll xP^{-P}
 =\frac{x}{(\log x)^{2+o(1)}}
 =o\left(\frac{x}{\log x}\right).
\]

We also need to bound the omitted part of $F(L)$.  Since
$r_{p,c}\le\log p/\log2$, it is at most
\[
 2\sum_{n>P}\exp(g_L(n)),
 \qquad
 g_L(u):=-u\log u+\frac{\log L}{\log2}\log u.
\]
For all sufficiently large $L$ and $u\ge P$,
\[
 g_L'(u)=-\log u-1+\frac{\log L}{u\log2}
            \le-\tfrac12\log P.
\]
The sum is consequently bounded by a geometric series and is
$O(\exp(g_L(P)))$.  Finally,
\[
 g_L(P)=-(2+o(1))L,
\]
so this tail is $o(1)$.  As $F(L)\ge a_1\ge3^{-3}>0$, equation
\eqref{eq:counting-truncation} and the preceding estimates prove
\eqref{eq:count-F}.
\end{proof}

\section{The dominant families and the second term}
\label{sec:dominant-families}

Recall that $q_r$ is the least prime greater than $2^r$.
Bertrand's postulate gives
\begin{equation}\label{eq:least-prime-bounds}
 2^r<q_r<2^{r+1}.
\end{equation}
Thus $(p,c)=(q_r,1)$ contributes to $a_r$.  The next estimate shows
that this single pair accounts for almost all of $a_r$ as $r$ grows.

\begin{lemma}\label{lem:leading-counting-coefficient}
For $r\ge1$,
\begin{equation}\label{eq:leading-counting-coefficient}
 a_r=q_r^{-q_r}\left(1+O\left(\frac1{q_r}\right)\right).
\end{equation}
\end{lemma}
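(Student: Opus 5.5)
The pair $(q_r,1)$ satisfies $2^r<q_r\le 2^{r+1}$ by \eqref{eq:least-prime-bounds}, so it contributes exactly $q_r^{-q_r}$ to $a_r$. This already gives the lower bound. It remains to show that all other pairs $(p,c)$ with $2^rc<p\le2^{r+1}c$ contribute $O(q_r^{-q_r-1})$ in total. I would split these pairs into two classes: those with $c=1$ and $p\ne q_r$, and those with $c\ge2$.

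For $c=1$ and $p\neq q_r$, the conditions force $p>2^r$ and $p$ prime, so $p\ge q_r+1$; in fact $p\ge q_r+2$, since $q_r+1$ is even for $r\ge1$. The contribution is therefore at most
\[
 \sum_{n\ge q_r+1}n^{-n}\le 2(q_r+1)^{-(q_r+1)}.
\]
To compare this with $q_r^{-q_r}$, write $(q+1)^{-(q+1)}\le q^{-q}\cdot q^{-1}$. The sum itself is dominated by its first term, because consecutive ratios are at most $1/n$. This gives $O(q_r^{-q_r-1})$.

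For $c\ge2$, we have $p>2^rc\ge 2^{r+1}$, hence $p>q_r$, and the exponent is $p+c-1\ge p+1$. Summing over $c\ge2$ for fixed $p$ is a geometric series bounded by $2p^{-p-1}$. Summing over $p\ge q_r+1$ then gives at most
\[
 \sum_{n\ge q_r+1}2n^{-n-1}\ll (q_r+1)^{-q_r-2},
\]
which is far smaller than $q_r^{-q_r-1}$.

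Combining the two classes gives $a_r=q_r^{-q_r}(1+O(1/q_r))$. The only step requiring any care is the elementary inequality $(q+1)^{-(q+1)}\le q^{-q-1}$, together with the domination of the tail sums by their first terms. Neither of these is a real obstacle, and the implied constant is uniform in $r$.
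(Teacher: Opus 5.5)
Your proof is correct and follows essentially the paper's argument: every pair other than $(q_r,1)$ has $p>q_r$, and the resulting tail $\sum_{n>q_r}n^{-n}$ is dominated by its first term, which is $O(q_r^{-q_r-1})$. The differences are cosmetic: the paper handles all $c\ge1$ at once via $\sum_{c\ge1}p^{-(p+c-1)}\le 2p^{-p}$ rather than splitting into $c=1$ and $c\ge2$, and it bounds the tail using convexity of $u\log u$ instead of your consecutive-ratio estimate.
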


\begin{proof}
Every other pair in \eqref{eq:counting-coefficients} has $p>q_r$:
there is no smaller eligible prime, and $c\ge2$ would require
$p>2^{r+1}$.  For each $p$,
\[
 \sum_{c\ge1}p^{-(p+c-1)}\le2p^{-p}.
\]
For an integer $h\ge1$, the convexity of $u\log u$ gives
\[
 (q_r+h)\log(q_r+h)-q_r\log q_r
       \ge h(\log q_r+1).
\]
Summing the resulting geometric bound yields
\[
 \sum_{n>q_r}n^{-n}\ll\frac{q_r^{-q_r}}{q_r}.
\]
This proves \eqref{eq:leading-counting-coefficient}.
\end{proof}

\Needspace{5\baselineskip}
The coefficient estimate now proves the asymptotic stated in the
introduction.

\begin{proof}[Proof of Theorem~\ref{thm:exceptional-count}]
By Theorem~\ref{thm:count-F}, it is enough to show that
$F(t)\sim\Psi(t)$ as $t\to\infty$.  Fix an integer $R\ge1$.
Lemma~\ref{lem:leading-counting-coefficient} bounds the relative
difference of the terms with $r\ge R$ by $O(1/q_R)$.
The finitely many terms with $r<R$ are negligible compared with
$\Psi(t)$: a later fixed term has a higher power of $t$ and a positive
coefficient.  Hence
\[
 \limsup_{t\to\infty}\left|\frac{F(t)}{\Psi(t)}-1\right|
        \ll\frac1{q_R}.
\]
Letting $R\to\infty$ proves the assertion.
\end{proof}

\begin{remark}\label{rem:dominant-counting-families}
The $r$th term in \eqref{eq:Psi-definition} comes from integers
$q_r^{q_r}m$, where $m$ is squarefree, $\omega(m)=r$, and $q_r\nmid m$.
The squarefree lower bounds in Lemma~\ref{lem:bounded-divisor-count},
summed with the same truncation as in the proof of
Theorem~\ref{thm:count-F}, show that these families together contribute
$(1+o(1))E(x)$.  All other choices of $p$, $c$, or the exponents in $m$
contribute $o(E(x))$.  The first families are
\[
 3^3\ell,\qquad 5^5\ell_1\ell_2,\qquad
 11^{11}\ell_1\ell_2\ell_3,\qquad\ldots,
\]
where the primes in the squarefree part are distinct and differ from
the displayed prime.
\end{remark}

We next estimate the growth of $\Psi$ using only
\eqref{eq:least-prime-bounds}.

\begin{theorem}\label{thm:Psi-growth}
As $t\to\infty$,
\begin{equation}\label{eq:Psi-growth}
 \log\Psi(t)=\frac{\log t}{\log2}
             \bigl(\log_2t-\log_3t+O(1)\bigr).
\end{equation}
Consequently, as $x\to\infty$,
\begin{equation}\label{eq:exceptional-log-growth}
 \log\left(\frac{E(x)\log x}{x}\right)
    =\frac{\log_3x}{\log2}
       \bigl(\log_4x-\log_5x+O(1)\bigr).
\end{equation}
\end{theorem}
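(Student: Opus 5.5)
The plan is to bound $\log\Psi(t)$ by its largest term.  Since all terms are positive, $\Psi(t)$ is at least its maximal term, and it is at most that term times a controllable count of terms.  Using \eqref{eq:least-prime-bounds}, write $q_r=2^{r+\theta_r}$ with $0<\theta_r<1$, so that
\[
 \log\bigl(q_r^{q_r}\bigr)=q_r\log q_r=2^{r+\theta_r}(r+\theta_r)\log2 .
\]
The logarithm of the $r$th term is therefore
\[
 \ell_r(t):=(r-1)\log t-\log((r-1)!)-2^{r+\theta_r}(r+\theta_r)\log2 .
\]
Replacing $\theta_r$ by $0$ or by $1$ changes the last term only by a factor in $[1,2+O(1/r)]$ in the $2^r r$ scale.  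Hence $\ell_r(t)=(r-1)\log t-O(r\log r)-\kappa_r 2^r r$ with $\kappa_r\in[\log 2,\,4\log2]$, say.

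For the lower bound, I will choose $r_0:=\lfloor \log_2 t/\log2 - \log_3 t/\log 2\rfloor$, i.e.\ $2^{r_0}\asymp \log t/\log_2 t$.  Then $2^{r_0}r_0\ll (\log t/\log_2t)\cdot \log_2 t=\log t$, so the penalty term is $O(\log t)$.  The factorial term is $O(r_0\log r_0)=O(\log_2t\,\log_3t)=o(\log t)$.  This gives
\[
 \log\Psi(t)\ge \ell_{r_0}(t)=r_0\log t-O(\log t)=\frac{\log t}{\log2}\bigl(\log_2t-\log_3t+O(1)\bigr).
\]

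For the upper bound, I will maximize $\ell_r(t)\le (r-1)\log t-\log 2\cdot 2^r r$ over real $r$.  The factorial is dropped, which only helps the inequality.  For $r\ge r_0+A$ with $A$ a large constant, the penalty $2^r r\log 2\ge 2^A\cdot c\,\log t\cdot r/\log_2 t$ eventually dominates $r\log t$.  More precisely, $\ell_{r+1}-\ell_r\le \log t-\log2\cdot 2^r(r+2)<-\log t$ once $2^r r\ge (2/\log2)\log t$, which happens for $r\ge r_0+O(1)$.  Thus the terms with $r\ge r_0+A$ decay at least geometrically with ratio $1/t$, and their total is at most twice the term at $r_0+A$.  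For $r\le r_0+A$ each term is at most $\exp((r_0+A)\log t)$, and there are at most $r_0+A=O(\log_2 t)$ such terms.  Hence $\log\Psi(t)\le (r_0+O(1))\log t+O(\log_3 t)$, which together with the lower bound gives \eqref{eq:Psi-growth}.  The main obstacle is keeping these error terms honestly at the level $O(\log t)$, i.e.\ $O(1)$ inside the bracket.  The crux is that uncertainty in $\theta_r$ only shifts the optimal $r$ by $O(1)$, which is exactly absorbed there.

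For \eqref{eq:exceptional-log-growth}, Theorem~\ref{thm:exceptional-count} gives $\log(E(x)\log x/x)=\log\Psi(\log_2x)+o(1)$.  Substituting $t=\log_2x$, we have $\log t=\log_3x$, $\log_2t=\log_4x$, and $\log_3t=\log_5x$, and the $o(1)$ is absorbed into the $O(1)$ term.
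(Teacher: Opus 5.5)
Your proof is essentially the paper's. The lower bound comes from the single term at the dominant index $r_0\approx(\log_2t-\log_3t)/\log2$. The upper bound drops the factorial, uses $q_r>2^r$, counts the $O(\log_2t)$ relevant terms and handles the rest as a geometric tail. Your upper bound is a little leaner than the paper's: bounding the terms with $r\le r_0+A$ simply by $t^{r-1}$ avoids solving $e^u(u+1)=s/\log2$ for the maximum of $\phi_s$.

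One step needs correcting. The inequality $\ell_{r+1}-\ell_r\le\log t-(\log2)2^r(r+2)$ does not follow from \eqref{eq:least-prime-bounds}. Those bounds give only $q_r<2^{r+1}<q_{r+1}$, so they give no useful lower bound on $q_{r+1}\log q_{r+1}-q_r\log q_r$. The inequality in fact fails at $r=5$: there $q_5=37$ and $q_6=67$ give $\ell_6-\ell_5\approx\log t-149.7$, whereas your bound claims at most $\log t-155.3$.

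The fix is to run the ratio argument on the envelope $\psi(r):=(r-1)\log t-(\log2)r2^r$, which satisfies $\psi(r)\ge\ell_r$ termwise. Then $\psi(r+1)-\psi(r)=\log t-(\log2)2^r(r+2)<-\log t$ for $r\ge r_0+A$. Hence $\sum_{r\ge r_0+A}e^{\ell_r}\le\sum_{r\ge r_0+A}e^{\psi(r)}\le2e^{\psi(r_0+A)}\le2t^{r_0+A-1}$, and the rest of your argument, including the substitution $t=\log_2x$, goes through unchanged.
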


\begin{proof}
Put $s:=\log t$.  The logarithm of the $r$th term of $\Psi(t)$ is
\[
 W_r:=(r-1)s-\log((r-1)!)-q_r\log q_r.
\]
For the upper bound, define
\[
 \phi_s(r):=rs-(\log2)r2^r.
\]
Discarding the factorial and using $q_r>2^r$ gives $W_r\le\phi_s(r)$.
To maximize the right-hand side over real $r$, put $u:=r\log2$.
The equation for its critical point is
\[
 e^u(u+1)=\frac{s}{\log2}.
\]
It follows that every integer $r$ maximizing $\phi_s$ satisfies
\begin{equation}\label{eq:dominant-index}
 r=\frac{\log s-\log_2s+O(1)}{\log2},
\end{equation}
and substitution gives
\[
 \max_{r\ge1}\phi_s(r)
     =\frac{s}{\log2}\bigl(\log s-\log_2s+O(1)\bigr).
\]
There are $O(s)$ integers $r\le2s$.  For sufficiently large $s$ and
$r>2s$,
\[
 \phi_s(r)=r\bigl(s-(\log2)2^r\bigr)
              \le-\frac{\log2}{2}r2^r,
\]
so the sum of the remaining terms is bounded.  Summing all terms
therefore adds only $O(\log s)$ to the logarithm of the largest upper
bound.

For the lower bound, choose an integer $r$ satisfying
\eqref{eq:dominant-index}.  Then $2^r\asymp s/\log s$.
By \eqref{eq:least-prime-bounds},
\[
 q_r\log q_r=O(s),\qquad \log((r-1)!)=o(s),
\]
whereas
\[
 rs=\frac{s}{\log2}\bigl(\log s-\log_2s+O(1)\bigr).
\]
Thus this one term has the required logarithm.  This proves
\eqref{eq:Psi-growth}.  Substituting $t=\log_2x$ and using
Theorem~\ref{thm:exceptional-count} gives
\eqref{eq:exceptional-log-growth}.
\end{proof}

The exceptional count is therefore larger than $x/\log x$ by more
than any fixed power of $\log_2x$, while remaining smaller than
$x/(\log x)^{1-\varepsilon}$ for every fixed $\varepsilon>0$.

\begin{corollary}\label{cor:exceptional-count-bounds}
For every fixed $A>0$ and every $\varepsilon>0$,
\[
 \frac{x(\log_2x)^A}{\log x}=o(E(x)),
 \qquad
 E(x)=o\left(\frac{x}{(\log x)^{1-\varepsilon}}\right).
\]
\end{corollary}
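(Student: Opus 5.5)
Both bounds follow from Theorem~\ref{thm:exceptional-count} together with the growth estimate \eqref{eq:exceptional-log-growth} of Theorem~\ref{thm:Psi-growth}. Put $G(x):=E(x)\log x/x$ and $L:=\log_2x$. It suffices to show that $G(x)/L^A\to\infty$ for each fixed $A$, and that $G(x)(\log x)^{-\varepsilon}\to0$ for each fixed $\varepsilon>0$.

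For the lower bound I would use \eqref{eq:Psi-growth} directly with $t=L$. This gives
\[
 \log G(x)=\frac{\log L}{\log2}\bigl(\log_2L-\log_3L+O(1)\bigr).
\]
Since $\log_2L-\log_3L\to\infty$, the right side exceeds $(A+1)\log L$ for all large $x$. Hence $G(x)\ge L^{A+1}$, and so $x L^A/\log x=o(E(x))$.

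A more elementary route avoids the $O(1)$ bookkeeping. One term of $\Psi$ suffices, because all its coefficients are positive. Choose a fixed $r>A+1$; then $\Psi(t)\ge t^{r-1}/((r-1)!\,q_r^{q_r})$, which grows faster than $t^A$. Combined with Theorem~\ref{thm:exceptional-count}, this gives the first claim immediately.

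For the upper bound, \eqref{eq:exceptional-log-growth} gives
\[
 \log G(x)=O(\log_3x\cdot\log_4x).
\]
This is $o(\log_2x)$, so $\log G(x)-\varepsilon\log_2x\to-\infty$. Exponentiating yields $G(x)=o\bigl((\log x)^{\varepsilon}\bigr)$, which is exactly $E(x)=o\bigl(x/(\log x)^{1-\varepsilon}\bigr)$.

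There is no real obstacle here. The only point needing care is that the implied constants in \eqref{eq:Psi-growth} are absolute, so the $O(1)$ terms cannot overturn the divergence in either direction. That comparison is routine.
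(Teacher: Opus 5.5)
Your proof is correct and follows the paper's argument: both read the two estimates off \eqref{eq:exceptional-log-growth}, using that $\log(E(x)\log x/x)$ eventually exceeds any fixed multiple of $\log_3x$ and is $o(\log_2x)$. Your alternative lower bound via the single term $\Psi(t)\ge t^{r-1}/((r-1)!\,q_r^{q_r})$ with fixed $r>A+1$ is also valid, and it needs only Theorem~\ref{thm:exceptional-count}, not Theorem~\ref{thm:Psi-growth}.
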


\begin{proof}
By \eqref{eq:exceptional-log-growth},
$\log(E(x)\log x/x)$ grows faster than $A\log_3x$ and is
$o(\log_2x)$.  These are the two asserted estimates.
\end{proof}

Finally, recall that $A(x)$ counts all matchable integers up to $x$.  The
exceptional count gives a second term in the density theorem of
McNew and Pomerance~\cite{McNewPomerance}.

\begin{corollary}\label{cor:matchable-second-term}
With $\alpha$ as in Corollary~\ref{cor:strong-density}, we have,
as $x\to\infty$,
\[
 A(x)=\alpha x+\frac{x}{\log x}\Psi(\log_2x)(1+o(1)).
\]
\end{corollary}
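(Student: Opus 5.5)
The plan is to split the matchable integers up to $x$ into the $M$-numbers and the matchable non-$M$-numbers. By Theorem~\ref{thm:strong} every $M$-number is strongly matchable, hence matchable, so
\[
 A(x)=\#\{n\le x:n\text{ is an }M\text{-number}\}+E(x).
\]
Theorem~\ref{thm:exceptional-count} gives $E(x)=(1+o(1))\frac{x}{\log x}\Psi(\log_2x)$. It therefore suffices to show that the $M$-number count equals $\alpha x+o\bigl(x\Psi(\log_2x)/\log x\bigr)$. Since $\Psi(t)\ge 3^{-3}$, it is enough to prove an error term $o(x/\log x)$.

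For the $M$-number count, I would use that the indicator of $M$-numbers is multiplicative, with $\mathbf 1_M=1*g$. Here $g$ is supported on integers of the form $\prod p^{e_p}$ with each $e_p\in\{0,p\}$ (since $\mathbf 1_M(p^e)=1$ for $e<p$ and $0$ otherwise, we get $g(p^p)=-1$ and $g(p^e)=0$ otherwise). Equivalently, I would use inclusion--exclusion over the sets $S$ of primes, with $d_S=\prod_{p\in S}p^p$:
\[
 \#\{n\le x: n\in M\}=\sum_{d\le x}g(d)\Bigl\lfloor\frac xd\Bigr\rfloor
 =x\sum_{d}\frac{g(d)}{d}-x\sum_{d>x}\frac{g(d)}{d}+O\Bigl(\sum_{d\le x}|g(d)|\Bigr),
\]
and $\sum_d g(d)/d=\alpha$.

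The main step is bounding the two error sums. The number of $d\le x$ with $g(d)\ne0$ is tiny. Such $d$ is a product of distinct $p^p$, and each $p$ involved satisfies $p^p\le x$, so $p\ll\log x/\log_2x$. Hence the count is at most $2^{\pi(\log x)}=x^{o(1)}$, which is certainly $o(x/\log x)$; a cruder bound such as $\exp(O(\log x/\log_2 x))$ also suffices.

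For the tail $\sum_{d>x}|g(d)|/d$, I would use Rankin's trick. For $0<\sigma<1$,
\[
 \sum_{d>x}\frac{|g(d)|}{d}\le x^{-\sigma}\prod_p\bigl(1+p^{-p(1-\sigma)}\bigr).
\]
Taking $\sigma=1/2$, the product converges, because $\sum_p p^{-p/2}<\infty$. This makes the tail contribution $O(x^{1/2})$.

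Combining these bounds gives $\#\{n\le x:n\in M\}=\alpha x+O(x^{1/2+o(1)})=\alpha x+o(x/\log x)$. The error is then absorbed into the $(1+o(1))$ factor of the $E(x)$ term, using $\Psi(\log_2x)\ge3^{-3}$, and the corollary follows. I expect no real obstacle. The only point needing care is the justification of the Dirichlet-convolution identity and the interchange of sums, which is routine because $\sum|g(d)|/d$ converges absolutely.
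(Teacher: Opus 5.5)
Your proof is correct and follows the paper's route: write $A(x)$ as the $M$-number count plus $E(x)$ using Theorem~\ref{thm:strong}, use inclusion--exclusion over the prime powers $p^p$ to show that the $M$-number count is $\alpha x+o(x/\log x)$, and absorb this error into the $E(x)$ term from Theorem~\ref{thm:exceptional-count}. The only difference is the treatment of the tail: the paper truncates the inclusion--exclusion at primes $p\le\log_2x$ and bounds the remaining integers by $x\sum_{p>\log_2x}p^{-p}$, giving an error $O\bigl(x(\log x)^{-\log_3x}\bigr)$, whereas your untruncated identity $\mathbf 1_M=1*g$ with Rankin's trick gives the stronger error $O(x^{1/2})$.
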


\begin{proof}
Let $M(x)$ count the $M$-numbers up to $x$, and let $\pi(z)$ count
the primes up to $z$.  Inclusion--exclusion over primes $p\le z$,
followed by an upper bound for integers divisible by some $p^p$ with
$p>z$, gives
\[
 M(x)=x\prod_{p\le z}\left(1-\frac1{p^p}\right)
       +O\left(2^{\pi(z)}+x\sum_{p>z}\frac1{p^p}\right).
\]
The same tail sum bounds the difference between the finite product
and $\alpha$.  Taking $z:=\log_2x$, and using
$2^{\pi(z)}\le2^z$ and $\sum_{p>z}p^{-p}\ll z^{-z}$, we obtain
\[
 M(x)=\alpha x+o\left(\frac{x}{\log x}\right).
\]
Since $A(x)=M(x)+E(x)$ and $\Psi(t)\to\infty$ as $t\to\infty$,
Theorem~\ref{thm:exceptional-count} completes the proof.
\end{proof}

\section*{Acknowledgments}
We thank Bernardo Recam\'an for posing the original question
and Joachim K\"onig for the observation about divisors in the
MathOverflow discussion \cite{RecamanMO}.  We also thank Carl
Pomerance for inspiring this paper through his work with Nathan
McNew \cite{McNewPomerance} and for connecting our efforts.

\section*{AI Declaration}
BBDR did not use AI in developing his results.  MAA used ChatGPT
(OpenAI; GPT-6 Astra Pro) to assist with literature searches,
developing and checking mathematical arguments, and preparing the
joint manuscript.  The authors take responsibility for the content.

\end{document}